\documentclass{article}
\usepackage{amsfonts,amsmath,amsthm,hyperref}
\usepackage{xcolor}
\usepackage{tikz}
\newcounter{satznum}
\newtheorem{theorem}{Theorem}[satznum]

\numberwithin{theorem}{subsection} 

\newtheorem{lemma}[theorem]{Lemma}

\newtheorem{proposition}[theorem]{Proposition}
\newenvironment{acknowledgement}
 {\begin{trivlist}\item[]{\bf Acknowledgements.}}
 {\end{trivlist}}
\newenvironment{remark}
 {\begin{trivlist}\item[]{\bf Remark}}
 {\end{trivlist}}

\newenvironment{example}
 {\begin{trivlist}\item[]{\bf Example.}}
 {\end{trivlist}}

{\makeatletter
\gdef\me{{\mathbb E}} 
\gdef\nz{{\mathbb N}} 
\gdef\pr{{\mathbb P}} 
\gdef\rz{{\mathbb R}} 
}
\newcounter{todocounter}

\makeatletter
\def\@MRExtract#1 #2!{#1}
\newcommand{\MR}[1]{
  \xdef\@MRSTRIP{\@MRExtract#1 !}
  \href{http://www.ams.org/mathscinet-getitem?mr=\@MRSTRIP}{MR\@MRSTRIP}}
\makeatother
\newcommand{\scaling}{
\begin{tikzpicture}[->, thick, node distance=3cm, auto]
  \node (Xn) at (0,0) {$\frac{1}{N}X^{(N)}$};
  \node (Yn) at (0,-3) {$Y^{(N)}$};
  \node (X) at (3.5,0) {$X$};
  \node (Y) at (3.5,-3) {$Y$};

  \node at (-2,0.8) {discrete};
  \node at (-2,0.3) {space};
  \node at (-2,-0.2) {$\{0, \tfrac{1}{N},\tfrac{2}{N},\dots,1\}$};

  \node at (-2,-2.8) {discrete};
  \node at (-2,-3.3) {space};
  \node at (-2,-3.8) {$\{0,1,\dots,N\}$};

  \node at (5,0.8) {continuous};
  \node at (5,0.3) {space};
  \node at (5,-0.2) {$[0,1]$};

  \node at (5,-2.8) {discrete};
  \node at (5,-3.3) {space};
  \node at (5,-3.8) {$\nz_0$};

  \draw[->] (Xn) -- (X) node[midway,above] {$N\to\infty$};
  \draw[->] (Yn) -- (Y) node[midway,above] {$N\to\infty$};

  \draw[<->] (Xn) -- (Yn) node[midway,left] {$D^{(N)}$};
  \draw[<->] (X) -- (Y) node[midway,right] {$D$};
\end{tikzpicture}
}

\newcommand{\betapicture}{
\begin{tikzpicture}[scale=1.2]
  \draw[->] (0,0) -- (5,0) node[right] {$a$};
  \draw[->] (0,0) -- (0,4) node[above] {$b$};

  \draw[very thick] (1,0) -- (1,4) node[above] {$a=1$};
  \draw[thick] (2,0) -- (2,4) node[above] {$a=2$};

  \draw[thick] (0,1) -- (1,0) node[below] {};
  \draw (0,0) -- (4,4) node[right] {$a=b$};

  \node at (-.8,3.) {$\tfrac{1}{2}\delta_0 + \tfrac{1}{2}\delta_1$};
  \node at (1.45,3.3) {$\beta(b,b)$};
  \node at (3.8,3.3) {$\beta(1,1)$};
  \node at (2.5,1.3) {$\beta(\frac{b}{2},\frac{b}{2})$};

  \node at (3.8,+0.3) {$\delta_{\frac{1}{2}}$};

  \draw (1,-0.) -- (1,0.) node[below=3pt] {1};
  \draw (2,-0.) -- (2,0.) node[below=3pt] {2};

  \node[rotate=90] at (0.85,2) {HARMONIC};
\end{tikzpicture}
}

\title

\begin{document}
   \section*{On a $\Lambda$-mutation model and the harmonic model}
   \textsc{Cristian Giardin\`a}\footnote{Department of Mathematics, University of Modena and Reggio Emilia, Via G. Campi 213/b, 44125 Modena, Italy, E-mail address: cristian.giardina@unimore.it} and
   \textsc{Martin M\"ohle}\footnote{Department of Mathematics, University of T\"ubingen, Auf der Morgenstelle 10, 72076 T\"ubingen, Germany, E-mail address: martin.moehle@uni-tuebingen.de}

   \begin{center}
      \today
   \end{center}

\begin{abstract}
   We introduce a continuous-time mutation model with two types determined by a finite measure $\Lambda$ on the unit interval. The model satisfies a certain consistency property known from mathematical population genetics and includes so called harmonic models being of interest in mathematical statistical physics. We mainly focus on the situation when the number of particles is equal to some constant $N$. Duality results and scaling limits as $N\to\infty$ for the forward and backward processes are provided leading to a commutative diagram. The stationary distribution of the forward process is studied with an emphasis on the case when $\Lambda$ is a beta distribution. The work bridges particle models from mathematical statistical physics and mutation models from mathematical population genetics.

   \vspace{2mm}
   
   \noindent
   Keywords: Consistency; duality; harmonic model; $\Lambda$-mutation model; stationary distribution
\end{abstract}

\subsection{Introduction}

In this section we define the class of models that will be studied in this paper, we provide their interpretation and we explain how the paper is structured.

\subsubsection{Model definition}
Let $N\in\nz:=\{1,2,\ldots\}$. We study a class of continuous-time Markov processes $X^{(N)}=(X_t^{(N)})_{t\ge 0}$ with state space $E_N:=\{0,\ldots,N\}$ and generator $L^{X^{(N)}}$ acting on functions $f:E_N\to\rz$ via
\begin{equation} \label{onedimgen}
   L^{X^{(N)}}f(n)\ =\ \sum_{k=1}^n \varphi(k,n)\big(f(n-k)-f(n)\big)
   + \sum_{k=1}^{N-n}\varphi(k,N-n)\big(f(n+k)-f(n)\big),\quad n\in E_N,
\end{equation}
where empty sums are defined as zero. The nonnegative rates $\varphi(k,n)$, $1\le k\le n$, are assumed to satisfy the consistency relations
\begin{equation} \label{consis}
   \varphi(k,n)\ =\ \bigg(1-\frac{k}{n+1}\bigg)\varphi(k,n+1) + \frac{k+1}{n+1}\varphi(k+1,n+1),\quad n\in\nz,k\in\{1,\ldots,n\}.
\end{equation}
The consistency relations (\ref{consis}) are equivalent (see Proposition \ref{equivalent}) to the commutator equations
\begin{equation} \label{commutator1}
   [L^{X^{(N)}},A^{(N)}]\ =\ 0,\qquad N\in\nz,
\end{equation}
where $A^{(N)}$ denotes the annihilation operator acting as $A^{(N)}f(n)=nf(n-1)+(N-n)f(n)$, $n\in E_N$. Thus, for a consistent system, the action of removing at random a particle commutes with the evolution.

Proposition \ref{equivalent} also shows that the consistency relations (\ref{consis}) are equivalent to the existence of a finite measure $\Lambda$ on the unit interval $[0,1]$ such that
\begin{equation} \label{lambda_rep}
   \varphi(k,n)\ =\ \binom{n}{k}\int_{[0,1]} t^{k-1}(1-t)^{n-k}\,\Lambda({\rm d}t),\qquad n\in\nz, k\in\{1,\ldots,n\}.
\end{equation}
For this reason, we call the process $X^{(N)}$ with generator \eqref{onedimgen} the `{\em $\Lambda$-mutation model}'.

\subsubsection{Model interpretation}
The model introduced above can be seen as a mutation model in mathematical population genetics. To see this it is convenient to define $\nz_0:=\{0,1,2,\ldots\}$ and remark that the one-dimensional process $X^{(N)}$ is obtained from a two-dimensional (two-type) process with state space $\nz_0^2$, whose generator acts on bounded test functions $f:\nz_0^2\to\rz$ via
\begin{eqnarray} \label{twodimgen}
   \hspace{-5mm}
      Lf(n_1,n_2)
      & = & \sum_{k=1}^{n_1}\varphi(k,n_1)\big(f(n_1-k,n_2+k)-f(n_1,n_2)\big)\nonumber\\
      &   & +
            \sum_{k=1}^{n_2}\varphi(k,n_2)\big(f(n_1+k,n_2-k)-f(n_1,n_2)\big),
            \qquad n_1,n_2\in\nz_0.
\end{eqnarray}
The generator (\ref{onedimgen}) is obtained from (\ref{twodimgen}) when restricting on $n_1+n_2=N$. Again, the consistency relations (\ref{consis}) are equivalent (see Proposition \ref{equivalent}) to the commutator relation
\begin{equation} \label{commutator2}
   [L,A]\ =\ 0,
\end{equation}
where $A$ denotes the annihilation operator acting as $Af(n_1,n_2)=n_1f(n_1-1,n_2)+n_2f(n_1,n_2-1)$.
The two-dimensional process might be interpreted in terms of a population genetics model as follows. Consider a population, where each individual has one of two possible types. If the process is in state $(n_1,n_2)$, then $k\in\{1,\ldots,n_1\}$ of the $n_1$ particles of type $1$ mutate to type $2$ at the rate $\varphi(k,n_1)$
and $k\in\{1,\ldots,n_2\}$ of the $n_2$ particles of type $2$ mutate to type $1$ at the rate $\varphi(k,n_2)$.
The study of this model is partly motivated from related forward and backward processes known from mathematical population genetics models, namely the $\Lambda$-Fleming--Viot process (see, for example, Griffiths \cite[Eq.~(5)]{Griffiths2014}) and its dual counter part, the block counting process of the $\Lambda$-coalescent. Note however that the latter process has state space $\nz$ and absorbing state $1$ (corresponding to the most recent common ancestor), whereas the dual process $Y$ of our $\Lambda$-mutation model (see Theorem \ref{main2}) has state space $\nz_0$ and absorbing state $0$. Although there does not seem to be a direct relation between our mutation model and those models from mathematical population models, they share the common structure of duality linking the forward and backward evolution.

Another interpretation of the $\Lambda$-mutation model is in the framework of mathematical statistical physics. Indeed, the process with generator \eqref{twodimgen} belongs to the class of stochastic exchange models, which are a family of Markov processes in which a quantity, such as energy or wealth or mass, is randomly exchanged between agents interacting through the edges of a graph. The stochastic exchange models include, among others, the {\em averaging process} \cite{AldousLanoue2012} and the {\em Kipnis--Marchioro--Presutti model} \cite{KipnisMarchioroPresutti1982}. If we interpret $(n_1,n_2)$ as the number of particles at two sites of a graph, then we see from \eqref{twodimgen} that in the $\Lambda$-mutation model the redistribution rule is a function of the number of particles at the departure site only. In particular, if the measure $\Lambda$ is the uniform distribution on $[0,1]$, then the rate $\varphi(k,n)=1/k$ does not depend on $n$ and the model reduces to the so called {\em harmonic model} studied in the mathematical statistical physics literature on integrable particle systems (see, for example, \cite{FrassekGiardinaKurchan2020}). The specific choice of $\Lambda$ as the uniform measure (or more generally as the $\beta(1,b)$-distribution) yields a direct connection to integrability, in the sense of the Yang--Baxter equation \cite{FrassekGiardina2022}. This connection has been exploited to identify in closed form the stationary measure for boundary-driven one-dimensional chains  \cite{CarinciFranceschiniGabrielliGiardinaTsagkarogiannis2024,CarinciFranceschiniFrassekGiardinaRedig2023},
which is rarely available in statistical physics. For more general choices of the measure $\Lambda$ and the study of intertwining and dynamical properties (e.g. spectral gap) we refer the reader to \cite{GiardinaRedigvanTol2025} and \cite{KimQuattropaniSau2025}.

The mathematical analysis of both `worlds' has much in common and essentially all results presented in this paper strengthen the deep bridge between particle models from mathematical statistical physics and mutation models from mathematical population genetics.

\subsubsection{Main contributions}
The main contributions of this paper are as follows. We introduce a class of continuous-time mutation models whose transition rates satisfy a natural consistency property, and we show that this property is equivalent to the existence of a finite measure $\Lambda$ on the unit interval. This representation provides a unified framework encompassing both mutation models from population genetics and stochastic exchange models from statistical physics. We further analyze the behavior of these models in the large population limit and identify a limiting Markov process on $[0,1]$, whose generator has an explicit nonlocal form reflecting the underlying $\Lambda$-mechanism.
A key contribution of the paper is the systematic identification of duality relations. We construct dual processes for both the discrete and continuous models, showing that the forward dynamics can be related to suitable death processes through explicit duality functions. These dualities are derived from the algebraic structure of the generator, in particular from its action on monomials and factorial moments, and provide a powerful tool to analyze the evolution. In particular, they allow one to transfer information between the forward process and its dual, yielding insight into moment dynamics, scaling limits, and structural properties of the model. Finally, we investigate the stationary distribution of the limiting process. We obtain a recursive characterization of its moments and a functional equation for its moment generating function. In the important special case where $\Lambda$ is a beta distribution, we derive more explicit results, including a characterization of when the stationary distribution is itself a beta distribution, as well as regimes exhibiting reversibility and connections to integrable (harmonic) models.

\subsubsection{Paper organization}
This article is organized as follows. The main results are collected in Section \ref{results} with an emphasis on consistency in Subsection \ref{consistency} and scaling limits and duality in Subsection \ref{scalingandduality}, leading to the commutative diagram illustrated in Figure \ref{fig1}. A more advanced duality result is provided in Subsection \ref{advanced}. Moreover, Section \ref{results} contains detailed information on the model for the particular case when $\Lambda$ is a beta distribution (see Section \ref{beta}). Proofs are provided in Section \ref{proofs}.

\color{black}

\subsection{Results} \label{results}
\subsubsection{Consistency} \label{consistency}
We start our results with equivalent conditions relating consistency to commutator equations and to a measure $\Lambda$ on the unit interval.
\begin{proposition}[Consistency] \label{equivalent}
   Let $(\varphi(k,n))_{n\in\nz,k\in\{1,\ldots,n\}}$ be a triangular array of nonnegative real numbers. Then the following four statements are equivalent.
   \begin{enumerate}
      \item[(i)] The array $(\varphi(k,n))_{n\in\nz,k\in\{1,\ldots,n\}}$ satisfies the consistency relations (\ref{consis}).
      \item[(ii)] There exists a finite measure $\Lambda$ on the unit interval $[0,1]$ satisfying (\ref{lambda_rep}).
      \item[(iii)] The generator $L$, defined via (\ref{twodimgen}), satisfies the commutator relation (\ref{commutator2}).
      \item[(iv)] For every $N\in\nz$, the generator $L^{X^{(N)}}$, defined via (\ref{onedimgen}), satisfies the commutator relation (\ref{commutator1}).
   \end{enumerate}
\end{proposition}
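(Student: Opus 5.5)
The plan is to prove (i)$\Leftrightarrow$(ii) with a Hausdorff moment argument, (i)$\Leftrightarrow$(iii) by comparing coefficients in the commutator, and (iii)$\Leftrightarrow$(iv) by restricting to level sets of $n_1+n_2$.

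\emph{(i)$\Leftrightarrow$(ii).} Put $a(k,n):=\varphi(k,n)/\binom{n}{k}$. Two binomial identities rewrite the coefficients of (\ref{consis}):
\[
\frac{n+1-k}{n+1}\binom{n+1}{k}=\binom{n}{k}
\qquad\text{and}\qquad
\frac{k+1}{n+1}\binom{n+1}{k+1}=\binom{n}{k}.
\]
Hence (\ref{consis}) is equivalent to the Pascal-type recursion
\[
a(k,n)=a(k,n+1)+a(k+1,n+1),\qquad 1\le k\le n .
\]
This is the same recursion as for the rates of the $\Lambda$-coalescent.

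For (ii)$\Rightarrow$(i), insert (\ref{lambda_rep}) and use the factorisation $t^{k-1}(1-t)^{n-k}=t^{k-1}(1-t)^{n-k}\bigl((1-t)+t\bigr)$.

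For (i)$\Rightarrow$(ii), set $\mu_m:=a(m+1,m+1)$ for $m\in\nz_0$. Iterating the recursion expresses every entry through the diagonal:
\[
a(k,n)=\sum_{j=0}^{n-k}(-1)^j\binom{n-k}{j}\mu_{k-1+j}=(-\Delta)^{n-k}\mu_{k-1}.
\]
Nonnegativity of $\varphi$ then says exactly that $(\mu_m)$ is completely monotone. Hausdorff's theorem gives a unique finite measure $\Lambda$ on $[0,1]$ with $\int t^m\,\Lambda({\rm d}t)=\mu_m$, and then $a(k,n)=\int t^{k-1}(1-t)^{n-k}\,\Lambda({\rm d}t)$, which is (\ref{lambda_rep}).

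\emph{(i)$\Leftrightarrow$(iii).} Write $L=L_1+L_2$, where $L_1$ moves particles from type 1 to type 2 and $L_2$ does the reverse. Similarly write $A=A_1+A_2$, with $A_i$ removing a particle of type $i$.

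\begin{itemize}
\item The operators $L_1$ and $A_2$ commute, because $L_1$'s rates depend only on $n_1$ and the type-2 coordinate merely gets shifted. The same holds for $L_2$ and $A_1$.
\item Writing out both products, $L_1A_1f(n_1,n_2)$ and $A_1L_1f(n_1,n_2)$ are combinations of the values $f(n_1-1-k,n_2+k)$ for $k\ge0$.
\item For $k\ge1$, the coefficient of $f(n_1-1-k,n_2+k)$ in $L_1A_1$ is $n_1\varphi(k,n_1-1)$. In $A_1L_1$ it is $(n_1-k)\varphi(k,n_1)+(k+1)\varphi(k+1,n_1)$. These agree precisely when (\ref{consis}) holds at $n=n_1-1$.
\item The coefficient of $f(n_1-1,n_2)$ is $-n_1\sum_k\varphi(k,n_1-1)$ in $L_1A_1$, and $-n_1\sum_k\varphi(k,n_1)+\varphi(1,n_1)$ in $A_1L_1$. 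Summing (\ref{consis}) over $k$ and telescoping the $\frac{k+1}{n+1}\varphi(k+1,n+1)$ terms shows these are equal. So the diagonal needs no extra hypothesis.
\item Because the two directions of the commutator are symmetric, $[L,A]=[L_1,A_1]+[L_2,A_2]$ vanishes under (i).
\end{itemize}

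For the converse, apply $[L,A]=0$ to indicator functions $f=1_{\{(m_1,m_2)\}}$ and evaluate at $(n_1,0)$. With $n_2=0$ only $L_1,A_1$ contribute, and this isolates the coefficient identity above, i.e.\ (\ref{consis}).

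\emph{(iii)$\Leftrightarrow$(iv).} $L$ preserves $n_1+n_2$, and $A$ maps functions living on level $N-1$ to functions on level $N$. Identifying $E_N$ with $\{n_1+n_2=N\}$ via $n=n_1$, the operator $A^{(N)}$ is $A$ read between consecutive levels. Then (\ref{commutator1}) means $L^{X^{(N)}}A^{(N)}=A^{(N)}L^{X^{(N-1)}}$, which is (\ref{commutator2}) restricted to level $N$. Hence (iii) holds if and only if (iv) holds for all $N$; the converse direction again extracts coefficients with indicators.

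I expect the main obstacle to be the bookkeeping in (i)$\Leftrightarrow$(iii): tracking the boundary terms and the diagonal coefficient. By contrast, (i)$\Leftrightarrow$(ii) is routine once the normalisation $a(k,n)$ is in place.
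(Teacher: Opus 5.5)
\paragraph{The gap.} The gap is in (i)$\Leftrightarrow$(iii). The claim that $L_1$ and $A_2$ commute is false. The weight in $A_2$ is the current type-2 count, and after a $1\to2$ jump of size $k$ this count is $n_2+k$, not $n_2$. Hence
\[
[L_1,A_2]f(n_1,n_2)\ =\ \sum_{k=1}^{n_1}k\,\varphi(k,n_1)\,f(n_1-k,n_2+k-1)\ \ne\ 0 .
\]
So $[L,A]\ne[L_1,A_1]+[L_2,A_2]$, and your coefficient bookkeeping is misattributed.

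\begin{itemize}
\item In $[L_1,A_1]$ alone, the coefficient of $f(n_1-1-k,n_2+k)$ is $(n_1-k)\varphi(k,n_1)-n_1\varphi(k,n_1-1)$, one term from each product.
\item The terms $(k+1)\varphi(k+1,n_1)$ and $\varphi(1,n_1)$ that you place in $A_1L_1$ occur in neither $L_1A_1$ nor $A_1L_1$. They are exactly the cross commutator above, after reindexing $k\mapsto k+1$.
\item Taken literally, your plan therefore fails. For the harmonic rates $\varphi(k,n)=1/k$, which satisfy (\ref{consis}), that coefficient of $[L_1,A_1]$ equals $-1$, so $[L_1,A_1]\ne 0$ under (i).
\item The converse has the same flaw. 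At $(n_1,0)$ the term $L_1A_2$ does contribute, namely $\sum_k k\varphi(k,n_1)f(n_1-k,k-1)$. Without it you would extract $(n_1-k)\varphi(k,n_1)=n_1\varphi(k,n_1-1)$ instead of (\ref{consis}).
\end{itemize}

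\paragraph{The repair.} Split only $L$: write $[L,A]=[L_1,A]+[L_2,A]$.

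\begin{itemize}
\item $[L_1,A]f(n_1,n_2)$ involves only the values $f(n_1-1-k,n_2+k)$, $k\ge0$.
\item $[L_2,A]f(n_1,n_2)$ involves only the values $f(n_1+k,n_2-1-k)$, $k\ge0$.
\item These two sets of values are disjoint, so $[L,A]=0$ if and only if both commutators vanish.
\item The coordinate swap turns $[L_2,A]$ into $[L_1,A]$, so it suffices to treat $[L_1,A]$.
\end{itemize}

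With the contributions correctly attributed, the coefficients of $[L_1,A]$ are exactly the ones you wrote. The off-diagonal ones reproduce the paper's (\ref{13}). Your diagonal check via the summed consistency relation, which the paper omits, completes (i)$\Rightarrow$(iii). Evaluating at $(n_1,0)$ with $L_1A_2$ included then gives the converse.

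\paragraph{The remaining parts.} These are sound.

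\begin{itemize}
\item Applying Hausdorff's theorem to the completely monotone diagonal $a(m+1,m+1)$ is an equivalent substitute for the paper's de~Finetti argument.
\item Your level-set reduction for (iii)$\Leftrightarrow$(iv) is cleaner than the paper's direct recomputation.
\item Keep your reading of (\ref{commutator1}) as $L^{X^{(N)}}A^{(N)}=A^{(N)}L^{X^{(N-1)}}$. The paper's proof instead puts $L^{X^{(N)}}$ on both sides (see (\ref{20}) and (\ref{26})).
\item With equal $N$ on both sides, one gets $[L^{X^{(1)}},A^{(1)}]f(1)=-\varphi(1,1)\bigl(f(1)-f(0)\bigr)\ne0$. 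The paper's comparison of only the coefficients of $f(n-k)$, $2\le k\le n$, does not detect this.
\end{itemize}
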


\subsubsection{Scaling limits and duality} \label{scalingandduality}
Our first main result is a scaling limit for the process $X^{(N)}$ as $N\to\infty$.
\begin{theorem}[Scaling limit for $X^{(N)}/N$] 
   \label{main1}
   Assume that $X_0^{(N)}/N\to X_0$ in distribution as $N\to\infty$ for some random variable $X_0$. Then, as $N\to\infty$, the space-scaled process $(X_t^{(N)}/N)_{t\ge 0}$ converges in $D_{[0,1]}([0,\infty))$ to the process $X=(X_t)_{t\ge 0}$ having state space $[0,1]$, whose generator $L^X$ acts on test functions $f\in C^2([0,1])$ via
   \begin{equation} \label{generator_general}
   L^Xf(x)
   \ =\ \Lambda(\{0\})(1-2x)f'(x)+\int_{(0,1]}
        \big(f(x(1-t))+f(x(1-t)+t)-2f(x)\big)
        \frac{\Lambda({\rm d}t)}{t},
   \end{equation}
   where $\Lambda$ is the finite measure on $[0,1]$ uniquely determined via (\ref{lambda_rep}).
\end{theorem}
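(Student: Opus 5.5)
We use the standard generator-convergence route for Feller processes, on the compact state space $[0,1]$ (e.g.\ Ethier--Kurtz, Theorem 4.2.11 / Corollary 4.8.7). We show that for $f$ in a core of $L^X$,
\[
\sup_{n\in E_N}\big|L^{X^{(N)}}(f(\cdot/N))(n)-L^Xf(n/N)\big|\to 0 .
\]
We also need that $L^X$, with a suitable core, generates a Feller semigroup on $C([0,1])$. Convergence of the initial laws then gives convergence in $D_{[0,1]}([0,\infty))$.

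\textbf{Generator convergence.} Insert (\ref{lambda_rep}) into (\ref{onedimgen}) with $g(n)=f(n/N)$ and exchange sum and integral. For fixed $t\in(0,1]$, the downward part is
\[
\frac1t\sum_{k=1}^n\binom nk t^k(1-t)^{n-k}\big(f(\tfrac{n-k}N)-f(\tfrac nN)\big)=\frac1t\,\me\big[f(\tfrac{n-B}N)-f(\tfrac nN)\big],\qquad B\sim\mathrm{Bin}(n,t).
\]
The $k=0$ term vanishes, so the sum may start at $k=0$. Similarly the upward part is $\frac1t\me[f(\frac{n+B'}N)-f(\frac nN)]$ with $B'\sim\mathrm{Bin}(N-n,t)$. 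With $x=n/N$, the law of large numbers gives $(n-B)/N\approx x(1-t)$ and $(n+B')/N\approx x+(1-x)t=x(1-t)+t$, with variance of order $t(1-t)/N$.

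Taylor expansion to second order gives
\[
\me f\big(\tfrac{n-B}N\big)=f(x(1-t))+O\big(\|f''\|\,t/N\big),
\]
so after dividing by $t$ the error is $O(1/N)$, uniformly in $t$ and $n$. Hence the integrand over $(0,1]$ converges uniformly to $\frac1t\big(f(x(1-t))+f(x(1-t)+t)-2f(x)\big)$.

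Near $t=0$ this expression stays bounded, since it equals $(1-2x)f'(x)+O(t)$. Therefore dominated convergence against the finite measure $\Lambda$ on $(0,1]$ applies, and the error term integrates to $O(\Lambda([0,1])/N)$.

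The atom at $0$ has to be handled separately. Here $\varphi(k,n)$ receives $\Lambda(\{0\})\,n\,\mathbf 1_{k=1}$ from $t=0$, giving the contribution
\[
\Lambda(\{0\})\big[n\big(f(\tfrac{n-1}N)-f(\tfrac nN)\big)+(N-n)\big(f(\tfrac{n+1}N)-f(\tfrac nN)\big)\big]\to \Lambda(\{0\})(1-2x)f'(x),
\]
uniformly, by Taylor expansion.

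\textbf{Well-posedness of $L^X$.} This is the main obstacle: we need $L^X$ on a core to be the generator of a Feller semigroup. I would take polynomials as the core. A direct computation shows that $L^X$ maps $x^m$ to a polynomial of degree at most $m$:
\[
L^X x^m=\int_{(0,1]}\Big(x^m(1-t)^m+\big(x(1-t)+t\big)^m-2x^m\Big)\frac{\Lambda({\rm d}t)}t+\Lambda(\{0\})\,m\,x^{m-1}(1-2x).
\]
Each integrand is a polynomial in $x$ divisible by $t$, so the integrals are finite. Thus $L^X$ leaves the finite-dimensional spaces of polynomials of degree at most $m$ invariant, and $e^{tL^X}$ is explicitly defined on them.

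Next, the discrete semigroups $T^{(N)}_t$ are positive contractions. Together with the convergence $L^{(N)}p\to L^Xp$ on polynomials, this yields that the limiting semigroup on polynomials is a positive contraction, hence it extends to $C([0,1])$. The range condition $(\lambda-L^X)\mathcal P=\mathcal P$, which holds by the triangular structure of $L^X$ on polynomials, is dense in $C([0,1])$. Then Hille--Yosida / Ethier--Kurtz Theorem 1.2.12, combined with the positive maximum principle (inherited as a limit of the discrete generators), shows that the closure generates a Feller semigroup. The same generator convergence on this core also gives the convergence of the processes.

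Alternatively, the moment duality with the death process $Y$ of Theorem \ref{main2} gives uniqueness of the martingale problem directly. Since $C^2$ functions are approximated by polynomials and $L^X$ is continuous in the $C^2$ norm, the formula holds on $C^2([0,1])$.
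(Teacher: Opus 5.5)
Your proposal is correct and follows essentially the same route as the paper. Both arguments use polynomials as a core via the invariance $L^XP_n\subseteq P_n$, rewrite $L^{X^{(N)}}$ in binomial form with the atom $\Lambda(\{0\})$ treated separately as a drift, and conclude with the Ethier--Kurtz generator-convergence theorem.

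The differences are in execution, and they favour your version:
your second-order Taylor bound with $\mathrm{Var}\big((n-B)/N\big)\le t/N$ gives uniform convergence at rate $O(\|f''\|\Lambda([0,1])/N)$ for every $f\in C^2([0,1])$, and makes the domination near $t=0$ explicit. The paper instead uses dominated convergence, monotonicity of $f$ and P\'olya's theorem. You also verify explicitly that the closure of $L^X|_{\mathcal P}$ generates a Feller semigroup, via the positive maximum principle and $(\lambda-L^X)\mathcal P=\mathcal P$ for all $\lambda>0$, and that $C^2([0,1])$ lies in its domain; the paper takes these for granted when invoking Ethier--Kurtz Proposition 1.3.1.
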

\begin{remark} (Two-type process).
   As in the case of the process $X^{(N)}$ with discrete state space, also the process $X$ is obtained from a two-dimensional process. This is the process with state space $[0,\infty)^2$ whose generator acts on test functions $f\in C^2([0,\infty)^2)$ via
   \begin{eqnarray} \label{twodimgen-cont}
      {\cal L}f(x_1,x_2)
      & = & -\Lambda(\{0\})(x_1-x_2)\left(\frac{\partial}{\partial x_1}-\frac{\partial}{\partial x_2}\right)f(x_1,x_2)\nonumber\\
      &   & +\int_{(0,1]}\big(f(x_1-x_1 t,x_2+x_1 t )-f(x_1,x_2)\big)
            \frac{\Lambda({\rm d}t)}{t} \nonumber\\
      &   & +\int_{(0,1]}\big(f(x_1+x_2 t,x_2-x_2 t )-f(x_1,x_2)\big)
            \frac{\Lambda({\rm d}t)}{t}.
   \end{eqnarray}
   The generator \eqref{generator_general} is obtained from \eqref{twodimgen-cont} when restricting to $x_1+x_2=1$. Furthermore, when $\Lambda$ is the uniform measure on the interval $[0,1]$, because of the occurrence of the $1/t$-term in the integral in (\ref{generator_general}), we speak of the continuous harmonic model.
\end{remark}
The following lemma clarifies the action of the generator $L^X$ on monomials, a result that will be central to prove duality later.
\begin{lemma}[Action of $L^X$ on monomials] \label{generator_monomial}
   The generator $L^X$ acts on the $n$-th monomial $f_n(x):=x^n$ via
   \begin{equation} \label{monomial}
      L^Xf_n(x)\ =\ \sum_{k=1}^n \varphi(k,n)x^{n-k}-2\varphi(n)x^n,
      \qquad n\in\nz_0,
   \end{equation}
   where $\varphi(k,n)$ is defined via (\ref{lambda_rep}) and
   the total rate $\varphi(n)$ is defined via
   \begin{equation} \label{totalrate}
      \varphi(n)
      \ :=\ \sum_{k=1} ^n\varphi(k,n)
      \ =\ dn+\kappa+\int_{(0,1)}\frac{1-(1-t)^n}{t}\,\Lambda({\rm d}t),
      \qquad n\in\nz_0,
   \end{equation}
   with drift $d:=\Lambda(\{0\})$ and killing rate $\kappa:=\Lambda(\{1\})$. In particular, $L^X$ acts on the identity $f_1$ via $L^Xf_1(x)=(1-2x)\Lambda([0,1])$.
\end{lemma}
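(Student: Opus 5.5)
Plugging $f_n(x)=x^n$ into (\ref{generator_general}), I will treat the drift part and the integral over $(0,1]$ separately and then match the result with the right-hand side of (\ref{monomial}) via the representation (\ref{lambda_rep}).

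\emph{Drift part.} It contributes $d(1-2x)nx^{n-1}=dnx^{n-1}-2dnx^n$. For comparison, compute the atom at $0$ in (\ref{lambda_rep}). Since $t^{k-1}(1-t)^{n-k}$ at $t=0$ equals $1$ if $k=1$ and $0$ otherwise, the atom contributes $d\binom{n}{1}=dn$ to $\varphi(1,n)$ and nothing to $\varphi(k,n)$ for $k\ge2$. Hence on the right-hand side of (\ref{monomial}) its contribution is $dnx^{n-1}-2dnx^n$, which matches.

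\emph{Integral part.} For $t\in(0,1]$, expand
\[
(x(1-t)+t)^n=\sum_{j=0}^n\binom{n}{j}x^j(1-t)^jt^{n-j},
\]
so that
\[
\frac{1}{t}\Big(x^n(1-t)^n+(x(1-t)+t)^n-2x^n\Big)
=\frac{1}{t}\sum_{j=0}^{n-1}\binom{n}{j}x^j(1-t)^jt^{n-j}-\frac{1-(1-t)^n}{t}\,x^n\cdot 2 .
\]
This uses that the $j=n$ term $x^n(1-t)^n$ combines with the first summand $x^n(1-t)^n$; the total coefficient of $x^n$ is $2(1-t)^n-2=-2(1-(1-t)^n)$. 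Writing $j=n-k$ with $k\in\{1,\ldots,n\}$, the sum equals
\[
\sum_{k=1}^n\binom{n}{k}t^{k-1}(1-t)^{n-k}x^{n-k}.
\]
Integrating against $\Lambda$ over $(0,1]$ then produces exactly the $(0,1]$-part of $\varphi(k,n)$ in (\ref{lambda_rep}), together with the term $-2x^n\int_{(0,1]}\frac{1-(1-t)^n}{t}\Lambda({\rm d}t)$. Interchanging the finite sum with the integral is harmless: each integrand is bounded, since $t^{k-1}/t$ only appears with $n-j=k\ge1$, so there is no singularity.

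\emph{Total rate.} Adding the two parts gives (\ref{monomial}), provided $\varphi(n)=dn+\int_{(0,1]}\frac{1-(1-t)^n}{t}\Lambda({\rm d}t)$. Summing (\ref{lambda_rep}) over $k$ gives
\[
\sum_{k=1}^n\binom{n}{k}t^{k-1}(1-t)^{n-k}=\frac{1-(1-t)^n}{t}
\]
for $t>0$, with value $n$ at $t=0$. Splitting off the atom at $t=1$, where the integrand equals $1$, yields $\kappa$, and this establishes (\ref{totalrate}). The case $n=0$ is trivial, since both sides of (\ref{monomial}) vanish. For $n=1$, we have $\varphi(1,1)=\Lambda([0,1])=\varphi(1)$, which gives $L^Xf_1(x)=(1-2x)\Lambda([0,1])$. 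The only delicate point is the bookkeeping of the atoms at $0$ and $1$ and justifying the termwise integration, which the boundedness observation above handles.
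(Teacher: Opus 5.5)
Your proof is correct and follows essentially the same route as the paper: binomially expand $(x(1-t)+t)^n$, merge its top-degree term with $x^n(1-t)^n$, and read off the coefficients against (\ref{lambda_rep}). The only differences are minor. The paper absorbs the atom at $0$ by treating the integrand over $[0,1]$ as continuously extended at $t=0$, where it equals $(1-2x)f_n'(x)$, whereas you handle that atom and the drift term separately. Also, your step splitting off $\kappa$ uses that the integrand equals $1$ at $t=1$, which is true only for $n\in\nz$; for $n=0$ it vanishes. So the second equality in (\ref{totalrate}) should be read for $n\ge 1$, which is consistent with your use of the empty sum $\varphi(0)=0$.
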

\begin{remark} (Cone property).
   For $n\in\nz_0$ let $P_n$ denote the space of all polynomials on $[0,1]$ of degree less than or equal to $n$. Eq.~(\ref{monomial}) in particular implies that $L^XP_n\subseteq P_n$ for all $n\in\nz_0$, a crucial and useful property of $L^X$.
\end{remark}
\begin{remark} (The rates are determined by the total rates).
   Let $n\in\nz$ and $k\in\{1,\ldots,n\}$ and assume for the moment that $\Lambda$ has no mass at $0$. Then, by (\ref{lambda_rep}),
   \begin{eqnarray*}
      \varphi(k,n)
      & = & \binom{n}{k}
            \int_{(0,1]}\big(1-(1-t)\big)^k(1-t)^{n-k}
            \,\frac{\Lambda({\rm d}t)}{t}\\
      & = & \binom{n}{k}
            \int_{(0,1]}\sum_{j=0}^k\binom{k}{j}\big(-(1-t)\big)^{k-j}(1-t)^{n-k}
            \,\frac{\Lambda({\rm d}t)}{t}\\
      & = & \binom{n}{k}\int_{(0,1]}
            \sum_{j=0}^k \binom{k}{j}(-1)^{k-j}(1-t)^{n-j}\,\frac{\Lambda({\rm d}t)}{t}.
   \end{eqnarray*}
   Exploiting the relation $\sum_{j=0}^k\binom{k}{j}(-1)^{k-j}=(1-1)^k=0$ yields
   \begin{eqnarray}
      \varphi(k,n)
      & = & \binom{n}{k}\int_{(0,1]}
            \sum_{j=0}^k \binom{k}{j}(-1)^{k-j}\big((1-t)^{n-j}-1\big)
            \frac{\Lambda({\rm d}t)}{t}\nonumber\\
      & = & \binom{n}{k}\sum_{j=0}^k \binom{k}{j}(-1)^{k-j+1}\varphi(n-j).
      \label{ratesrelation}
   \end{eqnarray}
   It is easily seen that (\ref{ratesrelation}) remains valid if $\Lambda(\{0\})>0$. The rates $\varphi(k,n)$ can hence be computed from the total rates (\ref{totalrate}) via (\ref{ratesrelation}). Similar relations between the rates and the total rates are known for coalescent processes with multiple collisions; see, for example, \cite[Eq.~(16)]{Moehle2006}.
\end{remark}
\begin{remark} (Relation to subordinators).
   The right-hand side in (\ref{totalrate}) is even defined with $n$ replaced by an arbitrary real number $\eta\ge 0$. Let $Q$ denote the measure on $(0,1)$ defined via $Q({\rm d}t):=\Lambda({\rm d}t)/t$ and let $\varrho:=Q_T$ denote the image of $Q$ under the transformation $T(t):=-\log(1-t)$, $t\in(0,1)$. Then, for all $\eta\ge 0$,
   \[
   \varphi(\eta)
   \ :=\ d\eta+\kappa+\int_{(0,1)}\big(1-(1-t)^\eta\big)\,Q({\rm d}t)
   \ =\ d\eta+\kappa+\int_{(0,\infty)}(1-e^{-\eta u})\,\varrho({\rm d}u).
   \]
   Thus, the map $\eta\mapsto\varphi(\eta)$, $\eta\ge 0$, is the Laplace exponent of a L\'evy process with non-decreasing paths (subordinator) $S=(S_t)_{t\ge 0}$ with drift $d$, killing rate $\kappa$ and L\'evy measure $\varrho$.
   Note that the generator $L^S$ of the subordinator $S$ acts on functions $f\in C^2([0,\infty))$ with $f,f',f''\in C_0([0,\infty))$ via
   \begin{equation} \label{subgen}
      L^Sf(x)\ =\ df'(x) - \kappa f(x) + \int_{(0,\infty)}
      \big(f(x+u)-f(x)\big)\,\varrho({\rm d}u),\qquad x\in[0,\infty).
   \end{equation}
   Obviously, from (\ref{generator_general}) it follows that the generator $L^X$ of the process $X$ obeys the decomposition $L^X=L_1+L_2$, where
   \[
   L_1f(x)\ :=\ -dxf'(x)+\kappa\big(f(0)-f(x)\big)+\int_{(0,1)}\big(f(x(1-t))-f(x)\big)\,Q({\rm d}t)
   \]
   and
   \[
   L_2f(x)\ :=\ d(1-x)f'(x)+\kappa\big(f(1)-f(x)\big)+\int_{(0,1)}\big(f(x(1-t)+t)-f(x)\big)\,Q({\rm d}t).
   \]
   Applying the transformation theorem to (\ref{subgen}) it is readily checked that $L_1$ and $L_2$ are the generators of the processes $(e^{-S_t})_{t\ge 0}$ and $(1-e^{-S_t})_{t\ge 0}$ respectively. Thus, the processes having generators $L_1$ and $L_2$ respectively both have a probabilistic interpretation in terms of the subordinator $S$. There does not seem to be a similar straightforward interpretation of the process $X$ (having generator $L_1+L_2$) in terms of the subordinator $S$.
\end{remark}

Since the state space $[0,1]$ is compact, the limiting process $X$ in Theorem \ref{main1} has a unique stationary distribution $\mu$ (see, for example, Ethier and Kurtz \cite[p.~240, Theorem 9.3 and Remark 9.4]{EthierKurtz2005}). A recursion for the moments
\begin{equation} \label{moments}
   \mu_n\ :=\ \int f_n\,{\rm d}\mu\ =\ \int x^n\,\mu({\rm d}x),\qquad n\in\nz_0,
\end{equation}
of $\mu$ is obtained as follows. By \cite[Proposition 9.2]{EthierKurtz2005} and (\ref{monomial}), for every $n\in\nz_0$,
\[
0
\ =\ \int L^Xf_n\,{\rm d}\mu
\ =\ \int\bigg(\sum_{k=1}^n \varphi(k,n) f_{n-k}-2\varphi(n)f_n\bigg)\,{\rm d}\mu
\ = \ \sum_{k=1}^n \varphi(k,n)\mu_{n-k} - 2\varphi(n)\mu_n.
\]
If $\Lambda$ is not the zero measure, then $\varphi(n)>0$ for all $n\in\nz$ and, hence, the moments satisfy the recursion $\mu_0=1$ and
\begin{equation} \label{momrec}
   \mu_n\ =\ \frac{1}{2\varphi(n)}\sum_{k=1}^n\varphi(k,n)\mu_{n-k},
   \qquad n\in\nz,
\end{equation}
and are hence uniquely determined. Note that the moments $\mu_0,\mu_1,\ldots$ uniquely determine the stationary distribution $\mu$, since the state space $[0,1]$ is compact. The following proposition provides a system of equations, which characterizes the moment generating function (mgf) of the stationary distribution $\mu$.
\begin{proposition}[Characterizing equation for the mgf of the stationary distribution] \label{intrep}
   The moment generating function (mgf) $g:[0,1]\to[0,\infty)$ of the stationary distribution $\mu$ of $X$, defined via $g(x):=\sum_{n=0}^\infty \mu_nx^n/n!$ for all $x\in[0,1]$, satisfies for every $x\in[0,1]$ the
   equation
   \begin{equation} \label{inteqn}
      2dxg'(x)+2\int_{(0,1]}\frac{g(x)-g(x(1-t))}{t}\,\Lambda({\rm d}t)\ =\
      dxg(x)+\int_{(0,1]}\frac{e^{xt}-1}{t}g\big(x(1-t)\big)\,\Lambda({\rm d}t)
   \end{equation}
   with $d:=\Lambda(\{0\})$. Conversely, if $g$ is the mgf of some distribution on $[0,1]$ and if $g$ satisfies
   the equation (\ref{inteqn}) for all $x\in[0,1]$, then $g$ is the mgf of the stationary distribution $\mu$ of $X$.
\end{proposition}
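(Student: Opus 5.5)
The approach is to translate the moment recursion (\ref{momrec}), equivalently $2\varphi(n)\mu_n=\sum_{k=1}^n\varphi(k,n)\mu_{n-k}$ for $n\in\nz$ (trivially true also for $n=0$), into a statement about $g$. We multiply by $x^n/n!$ and sum over $n$. Since $0\le\mu_n\le 1$, the series for $g$ and $g'$ converge absolutely on $[0,1]$. All the sums and integrals below have nonnegative integrands, so Tonelli justifies every interchange.

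\textbf{Left-hand side.} Write $\varphi(n)=dn+\int_{(0,1]}\frac{1-(1-t)^n}{t}\Lambda({\rm d}t)$, which is (\ref{totalrate}) with the atom at $1$ absorbed, since $(1-t)^n=0$ at $t=1$ for $n\ge1$. Then
\[
\sum_{n\ge0}2\varphi(n)\mu_n\frac{x^n}{n!}=2dxg'(x)+2\int_{(0,1]}\frac{g(x)-g(x(1-t))}{t}\,\Lambda({\rm d}t).
\]
Here we use $\sum_n n\mu_n x^n/n!=xg'(x)$ and $\sum_n(1-t)^n\mu_nx^n/n!=g(x(1-t))$. The $n=0$ term contributes $0$ on both sides, because at $t=1$ we have $g(x)-g(0)$ times the mass, which matches $\varphi(0)=0$ when we use the convention $0^0=1$ consistently.

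\textbf{Right-hand side.} Split $\varphi(k,n)$ via (\ref{lambda_rep}) into the atom at $0$, which contributes $\binom n k t^{k-1}(1-t)^{n-k}$ at $t=0$ and hence only the $k=1$ term $n$, and the part on $(0,1]$. The atom part gives $d\sum_n n\mu_{n-1}x^n/n!=dx\sum_m\mu_m x^m/m!=dxg(x)$. For the part on $(0,1]$, we use $\binom nk/n!=1/(k!(n-k)!)$ and substitute $m=n-k$:
\[
\sum_{n\ge1}\sum_{k=1}^n\frac{t^{k-1}(1-t)^{n-k}\mu_{n-k}x^n}{k!(n-k)!}
=\frac1t\sum_{k\ge1}\frac{(xt)^k}{k!}\sum_{m\ge0}\frac{\mu_m(x(1-t))^m}{m!}
=\frac{e^{xt}-1}{t}\,g(x(1-t)).
\]
Integrating against $\Lambda$ over $(0,1]$ gives the right-hand side of (\ref{inteqn}). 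This proves the forward direction.

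\textbf{Converse.} Suppose $g$ is the mgf of a distribution $\nu$ on $[0,1]$ with moments $\nu_n\in[0,1]$, and $g$ satisfies (\ref{inteqn}) on $[0,1]$. Both sides of (\ref{inteqn}) are then given by the power series computed above, now with coefficients built from $\nu_n$. These series have infinite radius of convergence, since $\nu_n\le1$ and $\varphi(n)\le dn+\Lambda([0,1])\,n$, using $(1-(1-t)^n)/t\le n$. Two power series agreeing on $[0,1]$, an interval with accumulation points, have equal coefficients. Comparing the coefficients of $x^n$ shows that $(\nu_n)$ satisfies $2\varphi(n)\nu_n=\sum_k\varphi(k,n)\nu_{n-k}$, with $\nu_0=1$. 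If $\Lambda\neq0$, then $\varphi(n)>0$, so the recursion (\ref{momrec}) determines $\nu_n=\mu_n$ for all $n$. Moment determinacy on the compact set $[0,1]$ then gives $\nu=\mu$.

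The case $\Lambda=0$ is degenerate: (\ref{inteqn}) holds trivially and every distribution is stationary, so presumably $\Lambda\neq0$ is tacitly assumed. The only mildly delicate steps are the bookkeeping of the atoms at $0$ and $1$ and the justification of the interchanges; positivity handles the latter.
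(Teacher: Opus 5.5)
Your proposal is correct and follows the paper's proof essentially step by step. Like the paper, you multiply the moment recursion by $x^n/n!$, sum over $n$, and identify both sides through the integral forms of $\varphi(n)$ and $\varphi(k,n)$; for the converse you compare power-series coefficients to recover (\ref{momrec}). Your extra details (Tonelli via positivity, the identity theorem for the coefficient comparison, and the remark that the converse needs $\Lambda\neq0$) only make explicit what the paper leaves tacit.
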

\begin{remark} (Solving Eq.~\eqref{inteqn}).
   If $\Lambda=\delta_0$ is the Dirac measure at $0$, then (\ref{inteqn}) reduces to the differential equation $2g'=g$ with solution $g(x)=e^{x/2}$ corresponding to $\mu=\delta_{1/2}$ as expected. For $\Lambda=\delta_1$, (\ref{inteqn}) immediately yields $g(x)=(e^x+1)/2$ corresponding $\mu=\frac{1}{2}\delta_0+\frac{1}{2}\delta_1$. For general $\Lambda$ it seems to be not straightforward to determine the solution $g$ to (\ref{inteqn}). Equations of the form (\ref{inteqn}) are related to Carleman type integral equations, which could be solved using particular analytic techniques. We refer the reader exemplary to Estrada and Kanwal \cite{EstradaKanwal1987} for such methods. The solutions typically involve residua and Cauchy principle values and are hence rarely known explicitly. We will see later that, for some particular but important classes of measures $\Lambda$, the stationary distribution $\mu$ is known explicitly.
\end{remark}
We now turn to duality results for the limiting process $X$. We first consider the particular case when $\Lambda$ is the uniform distribution, since in this case the duality result is explicit and straightforward to verify. Consider the duality kernel $D:[0,1]\times\nz_0\to\rz$ defined via
\begin{equation} \label{specialkernel}
   D(x,n)\ :=\ (n+1)x^n,\qquad x\in[0,1], n\in\nz_0.
\end{equation}
With this $D$, the following duality result holds.
\begin{proposition}[Duality, \mbox{$\Lambda=U[0,1]$}] \label{dualityspecial}
   Assume that $\Lambda$ is the uniform distribution on $[0,1]$. Then the process $X$ is dual to $Y$ with respect to the duality kernel $D$ defined via (\ref{specialkernel}), where $Y$ is a death process with state space $\nz_0$ and generator
   \begin{equation}
      L^Yf(n)
      \ :=\ \sum_{k=1}^n \bigg(\frac{1}{k}+\frac{1}{n-k+1}\bigg) \big(f(n-k)-f(n)\big),
      \qquad n\in\nz_0.
   \end{equation}
\end{proposition}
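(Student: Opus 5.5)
The plan is to verify the generator duality relation $L^X D(\cdot,n)(x) = L^Y D(x,\cdot)(n)$ for all $x\in[0,1]$ and $n\in\nz_0$. Then I will pass from generators to processes by the standard argument of Ethier and Kurtz, as in their Theorem 4.11 of Chapter 4 and the corollary that follows it. This step is unproblematic here for two reasons. First, $D$ is bounded on $[0,1]\times\{0,\ldots,m\}$. Second, $Y$ is a pure death process: started at $n$, it stays in $\{0,\ldots,n\}$, so only finitely many states are involved. As a result, $\me_x[D(X_t,n)]=\me_n[D(x,Y_t)]$ will follow.

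For the generator identity I will use Lemma \ref{generator_monomial}. For $\Lambda=U[0,1]$ we have $\varphi(k,n)=1/k$ (as noted in the introduction), and hence $\varphi(n)=H_n=\sum_{j=1}^n 1/j$. Thus
\[
L^X D(\cdot,n)(x) = (n+1)\sum_{k=1}^n \frac{1}{k}x^{n-k} - 2(n+1)H_n x^n .
\]
On the other side,
\[
L^Y D(x,\cdot)(n) = \sum_{k=1}^n\Big(\frac1k+\frac1{n-k+1}\Big)\big((n-k+1)x^{n-k}-(n+1)x^n\big).
\]
The coefficient of $x^n$ is $-(n+1)\sum_{k=1}^n(\frac1k+\frac1{n-k+1}) = -2(n+1)H_n$, which matches. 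For $x^{n-k}$ with $1\le k\le n$, the coefficient is $(\frac1k+\frac1{n-k+1})(n-k+1) = \frac{n-k+1}{k}+1 = \frac{n+1}{k}$, which also matches. So the identity reduces to this elementary coefficient comparison, and the weight $n+1$ in $D$ is chosen exactly so that it works.

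The only point needing a little care is the functional-analytic passage from generators to semigroups. The functions $x\mapsto x^n$ are polynomials and lie in the domain of $L^X$, by the cone property and the $C^2$ test-function class of Theorem \ref{main1}. I will note that $u(t,n):=\me_n[D(x,Y_t)]$ solves the finite system of Kolmogorov backward equations. Likewise, $v(t,n):=\me_x[D(X_t,n)]$ satisfies $\frac{d}{dt}v(t,n)=\me_x[L^XD(\cdot,n)(X_t)]=\sum_{m\le n} q_{nm}\,v(t,m)$, using the identity just proved together with the cone property. Both $u$ and $v$ solve the same finite linear ODE system with the same initial data $D(x,n)$, so they coincide by uniqueness. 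This avoids any appeal to a general duality theorem and is the step I would write out most carefully.
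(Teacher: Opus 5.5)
Your proposal is correct and follows the paper's proof. The paper also applies Lemma \ref{generator_monomial} with $\varphi(k,n)=1/k$ and $\varphi(n)=\sum_{i=1}^n 1/i$, and matches $L^YD(x,\cdot)(n)$ to $L^XD(\cdot,n)(x)$ via $\frac1k+\frac1{n-k+1}=\frac{n+1}{k(n-k+1)}$, which is exactly your coefficient comparison. The one extra step is your explicit passage from the generator identity to $\me_x[D(X_t,n)]=\me_n[D(x,Y_t)]$ through the finite lower-triangular linear ODE system on $\{0,\ldots,n\}$; the paper leaves this implicit, and your argument for it is sound.
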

Proposition \ref{dualityspecial} generalizes for arbitrary non-zero measure $\Lambda$ as follows.
\begin{theorem}[Duality]
   \label{main2}
   Assume that $\Lambda$ is not the zero measure. Then $X$ is dual to $Y$ with respect to the duality kernel $D:[0,1]\times\nz_0\to\rz$ defined via
   \begin{equation} \label{momkernel}
      D(x,n)\ :=\ \frac{x^n}{\mu_n},\qquad x\in[0,1], n\in\nz_0,
   \end{equation}
   where $Y$ is a death process with state space $\nz_0$, whose generator
   $L^Y$ acts on bounded functions $f:\nz_0\to\rz$ via
   \begin{equation} \label{Ygen}
      L^Yf(n)\ =\ \frac{1}{\mu_n}\sum_{k=1}^n\varphi(k,n)\mu_{n-k}\big(f(n-k)-f(n)\big),
   \qquad n\in\nz_0.
   \end{equation}
\end{theorem}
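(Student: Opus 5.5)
We will verify the generator criterion for duality, $L^X D(\cdot,n)(x) = L^Y D(x,\cdot)(n)$ for all $x\in[0,1]$ and $n\in\nz_0$, and then pass from generators to semigroups. First we check that $D$ is well defined. Since $\Lambda\neq 0$, the recursion (\ref{momrec}) applies. Its coefficients are nonnegative, $\mu_0=1$, and $\varphi(n,n)=\int t^{n-1}\Lambda({\rm d}t)$; at least one term $\varphi(k,n)\mu_{n-k}$ is positive, because $\varphi(n)>0$. By induction, $\mu_n>0$ for all $n$.

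Next we check that $Y$ is a conservative death process. By (\ref{momrec}) its total jump rate from $n$ is
\[
\frac{1}{\mu_n}\sum_{k=1}^n\varphi(k,n)\mu_{n-k}\ =\ 2\varphi(n)\ <\ \infty,
\]
and all jumps go downward, so from state $n$ only finitely many jumps occur. Thus $Y$ is a well-defined Markov chain on $\{0,\ldots,n\}$, and $0$ is absorbing.

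The generator identity is then immediate from Lemma \ref{generator_monomial}. Using linearity and (\ref{monomial}),
\[
L^X D(\cdot,n)(x) = \frac{1}{\mu_n}\Big(\sum_{k=1}^n\varphi(k,n)x^{n-k} - 2\varphi(n)x^n\Big).
\]
On the other side,
\[
L^Y D(x,\cdot)(n) = \frac{1}{\mu_n}\sum_{k=1}^n\varphi(k,n)\mu_{n-k}\Big(\frac{x^{n-k}}{\mu_{n-k}}-\frac{x^n}{\mu_n}\Big) = \frac{1}{\mu_n}\sum_{k=1}^n\varphi(k,n)x^{n-k} - \frac{x^n}{\mu_n^2}\sum_{k=1}^n\varphi(k,n)\mu_{n-k}.
\]
By (\ref{momrec}), the last sum equals $2\varphi(n)\mu_n$, so the two expressions agree.

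The main obstacle is the rigorous passage from the generator identity to $\me_x[D(X_t,n)]=\me_n[D(x,Y_t)]$. The cone property does this without general duality theorems (e.g.\ Ethier--Kurtz, Thm.~4.4.11). Fix $n$ and work on the finite-dimensional space $P_n$, which $L^X$ leaves invariant. Since $f_k\in C^2$ lies in the domain of $L^X$, Dynkin's formula shows that $u_k(t,x):=\me_x[X_t^k]/\mu_k$, $k\le n$, solves the linear ODE system
\[
\partial_t u_k = \sum_{j=1}^k \varphi(j,k)\frac{\mu_{k-j}}{\mu_k}\,(u_{k-j}-u_k).
\]
This uses exactly the coefficients of $L^Y$ restricted to $\{0,\ldots,n\}$. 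The function $v_k(t):=\me_k[x^{Y_t}/\mu_{Y_t}]$ satisfies the same system by Kolmogorov's backward equation for the finite-state chain $Y$. Both have initial value $x^k/\mu_k$, so uniqueness of solutions to finite linear ODE systems gives $u_k=v_k$, which is the duality. For the ODE for $u$ we need $\frac{d}{dt}\me_x f(X_t)=\me_x L^Xf(X_t)$ for polynomials $f$. This follows because $X$ solves the martingale problem for $L^X$ on $C^2([0,1])$, from the construction in Theorem \ref{main1}, and $L^X f_k$ is bounded and continuous.

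Finally, when $\Lambda=U[0,1]$ we have $\varphi(k,n)=1/k$ and $\mu_n=1/(n+1)$; the latter can be checked against (\ref{momrec}). Here the result recovers Proposition \ref{dualityspecial}, since $\varphi(k,n)\mu_{n-k}/\mu_n = \frac{n+1}{k(n-k+1)} = \frac1k+\frac1{n-k+1}$. This serves as a consistency check.
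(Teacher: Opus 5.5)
Your proposal is correct and takes essentially the paper's approach: its core, applying Lemma~\ref{generator_monomial} and using the recursion (\ref{momrec}) to replace $2\varphi(n)$ by $\mu_n^{-1}\sum_{k=1}^n\varphi(k,n)\mu_{n-k}$, is exactly the paper's proof. The paper stops at the generator identity $L^XD(\cdot,n)(x)=L^YD(x,\cdot)(n)$, so your further steps (positivity of $\mu_n$, finiteness of the total jump rate $2\varphi(n)$ of $Y$, and the passage to semigroup duality by ODE uniqueness on the invariant space $P_n$) are valid extra rigor rather than a different route.
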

\begin{remark} (Labelled particles).
   If the death process $Y$ is in state $n\in\nz$, then $k\in\{1,\ldots,n\}$ specific particles are removed at the rate $\frac{\mu_{n-k}}{\mu_n}\int_{[0,1]}t^{k-1}(1-t)^{n-k}\,\Lambda({\rm d}t)$. Note that $0$ is an absorbing state of $Y$.
\end{remark}
\begin{remark} (Explicit duality function).
   Theorem \ref{main2} becomes explicit if the moments $\mu_n$, $n\in\nz_0$, of $\mu$ are known. 
   The most prominent example occurs when $\Lambda$ is the uniform distribution. In this case (see Section \ref{beta}) the stationary distribution $\mu$ is as well the uniform distribution having moments $\mu_n=1/(n+1)$, $n\in\nz_0$. Thus, in this case
   the duality kernel is given by $D(x,n)=(n+1)x^n$, bringing us back to Proposition \ref{dualityspecial}. Section \ref{beta} focusses more generally on the case when $\Lambda=\beta(a,b)$ is the beta distribution with parameters $a,b>0$. In this case it turns out that the stationary distribution $\mu$ is a beta distribution if and only if $a\in\{1,2\}$ or $a=b$. In this case it is the $\beta(b/a,b/a)$-distribution. Section \ref{beta} also provides some information on the stationary distribution $\mu$ for the case $a+b=1$. For all other cases, only little seems to be known on the stationary distribution $\mu$.
\end{remark}
We now turn to an analog duality result for the discrete process $X^{(N)}$. Instead of monomials, now descending factorials come into play. For $m\in\{0,\ldots,N\}$ define $h_m^{(N)}(n):=(n)_m/(N)_m$, $n\in\{0,\ldots,N\}$. Note that $1=h_0^{(N)}\ge h_1^{(N)}\ge\cdots\ge h_N^{(N)}$. The following key lemma is the discrete counterpart of Lemma \ref{generator_monomial}. The proof of Lemma \ref{generator_hypergeometric} is based on an elementary but useful sampling duality relation for binomial distributions, which is deferred to the proof section (see Lemma \ref{aux}) for convenience.
\begin{lemma}[Action of $L^{X^{(N)}}$ on $h_m^{(N)}$] \label{generator_hypergeometric}
   The generator $L^{X^{(N)}}$ acts on $h_m^{(N)}$ via
   \begin{equation} \label{generatorhypergeometric}
      L^{X^{(N)}}h_m^{(N)}
      \ =\ \sum_{k=1}^m \varphi(k,m)h_{m-k}^{(N)} - 2\varphi(m)h_m^{(N)},
      \qquad m\in\{0,\ldots,N\},
   \end{equation}
   where $\varphi(.,.)$ and $\varphi(.)$ are defined via (\ref{lambda_rep}) and
   (\ref{totalrate}).
\end{lemma}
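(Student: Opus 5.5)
The plan is to split $L^{X^{(N)}}=L_{\downarrow}+L_{\uparrow}$ into the part with downward jumps, with rates $\varphi(k,n)$, and the part with upward jumps, with rates $\varphi(k,N-n)$. Into each part I substitute the integral representation (\ref{lambda_rep}) from Proposition \ref{equivalent}. I first assume $\Lambda(\{0\})=0$ and treat the atom at $0$ separately at the end. For $t\in(0,1]$ the weight $\binom{n}{k}t^{k-1}(1-t)^{n-k}$ equals $t^{-1}\,\pr(B=k)$ with $B\sim{\rm Bin}(n,t)$. The term $k=0$ contributes nothing, since $f(n)-f(n)=0$. So both parts become integrals against $\Lambda({\rm d}t)/t$ of binomial expectations:
\[
L_{\downarrow}f(n)=\int_{(0,1]}\me\big[f(n-B_{n,t})-f(n)\big]\frac{\Lambda({\rm d}t)}{t},\qquad
L_{\uparrow}f(n)=\int_{(0,1]}\me\big[f(n+B_{N-n,t})-f(n)\big]\frac{\Lambda({\rm d}t)}{t}.
\]

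For the downward part, $n-B_{n,t}\sim{\rm Bin}(n,1-t)$. The factorial moment identity $\me[(\mathrm{Bin}(n,p))_m]=(n)_m p^m$ then gives
\[
L_{\downarrow}h_m^{(N)}(n)=h_m^{(N)}(n)\int_{(0,1]}\big((1-t)^m-1\big)\frac{\Lambda({\rm d}t)}{t}=-\varphi(m)h_m^{(N)}(n).
\]
The last equality is (\ref{totalrate}), using that $\kappa=\Lambda(\{1\})$ is recovered from the contribution at $t=1$.

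For the upward part I use the sampling duality that the paper defers to Lemma \ref{aux}. Interpret $h_m^{(N)}(n)$ as the probability that $m$ individuals drawn without replacement from $N$, of which $n$ are of type $1$, are all of type $1$. After an event with parameter $t$, each type-2 individual converts independently with probability $t$. Hence $\me[h_m^{(N)}(n+B_{N-n,t})]$ is the expectation, over the sample, of the product of factors that equal $1$ for type-1 members and $t$ for type-2 members.

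Now attach to each of the $m$ sampled individuals an independent mark with probability $t$. Marked members impose no condition, while unmarked members must be of type $1$. This yields
\[
\me\big[h_m^{(N)}(n+B_{N-n,t})\big]=\sum_{i=0}^m\binom{m}{i}t^i(1-t)^{m-i}h_{m-i}^{(N)}(n),
\]
since any $m-i$ given sampled individuals are all of type $1$ with probability $h_{m-i}^{(N)}(n)$. I would verify this binomial-type identity for falling factorials carefully, either combinatorially as above or algebraically via Vandermonde's identity for $(n+K)_m$. I expect this step to be the main obstacle.

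Subtracting $h_m^{(N)}(n)$ and integrating against $\Lambda({\rm d}t)/t$ gives two pieces:
\begin{itemize}
\item[] the terms with $i\ge 1$ produce $\sum_{i=1}^m\varphi(i,m)h_{m-i}^{(N)}(n)$ by (\ref{lambda_rep});
\item[] the term with $i=0$ produces $\int((1-t)^m-1)\,\Lambda({\rm d}t)/t\cdot h_m^{(N)}(n)=-\varphi(m)h_m^{(N)}(n)$.
\end{itemize}
Adding the downward contribution yields (\ref{generatorhypergeometric}).

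Finally, when $d=\Lambda(\{0\})>0$, the atom at $0$ contributes $\varphi(1,n)=dn$ and $\varphi(k,n)=0$ for $k\ge 2$. This is a direct check: $dn\big((n-1)_m-(n)_m\big)/(N)_m=-dm\,h_m^{(N)}(n)$, and
\[
d(N-n)\frac{(n+1)_m-(n)_m}{(N)_m}=dm\,h_{m-1}^{(N)}(n)-dm\,h_m^{(N)}(n),
\]
using $(N-n)(n)_{m-1}=(N)_m h_{m-1}^{(N)}(n)-(n)_m$. The atom therefore contributes $dm\,h_{m-1}^{(N)}(n)-2dm\,h_m^{(N)}(n)$, which matches $\varphi(1,m)=dm$ and the drift part $dm$ of $\varphi(m)$. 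Alternatively, one can let $t\downarrow 0$ in the integrands above.
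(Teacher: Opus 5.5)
Your proposal is correct and follows the paper's proof essentially step by step. It uses the same split into downward and upward jumps, the same rewriting $\binom{n}{k}t^{k-1}(1-t)^{n-k}=\pr(K_{n,t}=k)/t$, the binomial factorial-moment identity for the downward part, and the sampling duality of Lemma \ref{aux} for the upward part. The differences are minor: your marking argument gives a self-contained proof of that sampling identity, in which the ${\rm Bin}(m,t)$ variable is the number of marked sample members (cleaner than the paper's thinning phrasing), and you check an atom of $\Lambda$ at $0$ by direct computation rather than by continuously extending the integrands at $t=0$.
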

\begin{remark} (Convergence of the stationary distributions).
Clearly, the process $X^{(N)}$ has a stationary distribution, which is denoted by $\mu^{(N)}$ in the following. Define
\[
\mu_m^{(N)}\ :=\ \int h_m^{(N)}\,{\rm d}\mu^{(N)}
\ =\ \sum_{n=0}^N\frac{(n)_m}{(N)_m}\mu^{(N)}(\{n\}),
\qquad m\in\{0,\ldots,N\}.
\]
By \cite[Proposition 9.2]{EthierKurtz2005} and (\ref{generatorhypergeometric}),
\begin{eqnarray*}
   0
   & = & \int L^{X^{(N)}}h_m^{(N)}\,{\rm d}\mu^{(N)}
   \ =\ \int \bigg(\sum_{k=1}^m \varphi(k,m)h_{m-k}^{(N)}-2\varphi(m)h_m^{(N)}\bigg)\,{\rm d}\mu^{(N)}\\
   & = & \sum_{k=1}^m \varphi(k,m)\mu_{m-k}^{(N)} - 2\varphi(m)\mu_m^{(N)}.
\end{eqnarray*}
Thus, if $\Lambda\ne 0$, then the $\mu_m^{(N)}$ satisfy the recursion $\mu_0^{(N)}=1$ and
\begin{equation} \label{recursionhyper}
   \mu_m^{(N)}\ =\ \frac{1}{2\varphi(m)}\sum_{k=1}^m \varphi(k,m)\mu_{m-k}^{(N)},
\qquad m\in\{1,\ldots,N\}.
\end{equation}
Comparing (\ref{recursionhyper}) with the recursion (\ref{momrec}) it follows that $\mu_m^{(N)}=\mu_m$ if $m\in\{0,\ldots,N\}$ and $\mu_m^{(N)}=0$ otherwise.
In particular, for all $m\in\{0,\ldots,N\}$, $\mu_m^{(N)}=\mu_m$ does not depend on $N$, a crucial property simplifying the further analysis considerably. For example, for all $m\in\nz_0$, $\mu_m^{(N)}\to\mu_m$ as $N\to\infty$. This convergence of moments implies the convergence $Z_N/N\to Z$ in distribution as $N\to\infty$, where $Z_N$ and $Z$ are random variables having distribution $\mu^{(N)}$ and $\mu$ respectively.
\end{remark}
\color{black}
\begin{theorem}[Duality for the process $X^{(N)}$]
   \label{main3}
   Assume that $\Lambda$ is not the zero measure. Then the process $X^{(N)}$ is dual to the process $Y^{(N)}$ with respect to the duality kernel $D^{(N)}:\{0,\ldots,N\}^2\to\rz$ defined via
   \begin{equation} \label{hyperkernel}
      D^{(N)}(n,m)\ :=\ \frac{h_m^{(N)}(n)}{\mu_m^{(N)}}
      \ =\ \frac{(n)_m}{(N)_m\mu_m^{(N)}}
        \ =\ \frac{\binom{n}{m}}{\binom{N}{m}\mu_m^{(N)}},
      \qquad n,m\in\{0,\ldots,N\},
   \end{equation}
   where $Y^{(N)}$ is a death process with state space $\{0,\ldots,N\}$, whose generator $L^{Y^{(N)}}$ acts on functions $f:\{0,\ldots,N\}\to\rz$ via
   \begin{equation} \label{YNgen}
      L^{Y^{(N)}}f(m)\ =\ \frac{1}{\mu_m^{(N)}}\sum_{k=1}^m \varphi(k,m)\mu_{m-k}^{(N)}\big(f(m-k)-f(m)\big),
      \qquad m\in\{0,\ldots,N\}.
   \end{equation}
\end{theorem}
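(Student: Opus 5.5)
The plan is to verify the generator form of the duality, $L^{X^{(N)}}D^{(N)}(\cdot,m)(n)=L^{Y^{(N)}}D^{(N)}(n,\cdot)(m)$ for all $n,m\in\{0,\ldots,N\}$. Since both state spaces are finite, this identity implies the semigroup duality $\me_n[D^{(N)}(X_t^{(N)},m)]=\me_m[D^{(N)}(n,Y_t^{(N)})]$ by the standard argument. Write $P_t$ and $Q_t$ for the two semigroups, viewed as finite matrices. The generator identity reads $L^{X^{(N)}}D^{(N)}=D^{(N)}(L^{Y^{(N)}})^T$ as matrices, so both $P_tD^{(N)}$ and $D^{(N)}Q_t^T$ solve the same linear matrix ODE $\frac{d}{dt}M_t=L^{X^{(N)}}M_t$. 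Both start at $D^{(N)}$, hence they coincide.

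First check that the kernel is well defined, i.e.\ that $\mu_m^{(N)}>0$ for $m\in\{0,\ldots,N\}$. By the preceding remark, $\mu_m^{(N)}=\mu_m$, and $\mu_m>0$ follows inductively from (\ref{momrec}): we have $\mu_0=1$, $\varphi(m)>0$ since $\Lambda\ne 0$, and $\varphi(k,m)\ge0$ with $\sum_k\varphi(k,m)=\varphi(m)>0$. Consequently the rates $\varphi(k,m)\mu_{m-k}^{(N)}/\mu_m^{(N)}$ in (\ref{YNgen}) are nonnegative and $Y^{(N)}$ is a genuine death process.

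Next compute the left side using Lemma \ref{generator_hypergeometric}. Fixing $m$, linearity in the function gives
\[
L^{X^{(N)}}D^{(N)}(\cdot,m)(n)=\frac{1}{\mu_m^{(N)}}\Big(\sum_{k=1}^m\varphi(k,m)h_{m-k}^{(N)}(n)-2\varphi(m)h_m^{(N)}(n)\Big).
\]
The right side is
\[
\frac{1}{\mu_m^{(N)}}\sum_{k=1}^m\varphi(k,m)\mu_{m-k}^{(N)}\Big(\frac{h_{m-k}^{(N)}(n)}{\mu_{m-k}^{(N)}}-\frac{h_m^{(N)}(n)}{\mu_m^{(N)}}\Big)
=\frac{1}{\mu_m^{(N)}}\Big(\sum_{k=1}^m\varphi(k,m)h_{m-k}^{(N)}(n)-\frac{h_m^{(N)}(n)}{\mu_m^{(N)}}\sum_{k=1}^m\varphi(k,m)\mu_{m-k}^{(N)}\Big).
\]
By the recursion (\ref{recursionhyper}), the last sum equals $2\varphi(m)\mu_m^{(N)}$, so the two sides agree. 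For $m=0$ both sides vanish trivially.

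The only substantive input is Lemma \ref{generator_hypergeometric} together with the recursion (\ref{recursionhyper}), both of which are available. The main point needing care is therefore just the positivity of $\mu_m^{(N)}$ for $m\le N$ and the finite-state passage from generator duality to semigroup duality, both of which are routine.
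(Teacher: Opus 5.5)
Your proposal is correct and follows the paper's proof essentially verbatim. You apply Lemma \ref{generator_hypergeometric} to $h_m^{(N)}/\mu_m^{(N)}$ and use the recursion (\ref{recursionhyper}) to replace $2\varphi(m)\mu_m^{(N)}$ by $\sum_{k=1}^m\varphi(k,m)\mu_{m-k}^{(N)}$, which turns the diagonal term into the loss term of $L^{Y^{(N)}}$. Your extra checks are sound and make explicit what the paper leaves implicit: positivity of $\mu_m^{(N)}$ by induction, and the matrix-ODE argument that passes from generator duality to semigroup duality on the finite state spaces.
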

\begin{remark} ($Y^{(N)}$ as a restriction of $Y$).
   The right hand side in (\ref{YNgen}) does not depend on $N$, since $\mu_m^{(N)}=\mu_m$ for all $m\in\{0,\ldots,N\}$ as observed after (\ref{recursionhyper}). The process $Y^{(N)}$ thus depends on $N$ only via its state space $\{0,\ldots,N\}$. One may interpret the process $Y^{(N)}$ simply as the process $Y$ restricted to the state space $\{0,\ldots,N\}$.
\end{remark}
\color{black}
We finish our results for general $\Lambda$ with a convergence result for the discrete dual process $Y^{(N)}$.
\begin{theorem}[Convergence of the dual processes]
   \label{main4}
   Assume that $\Lambda$ is not the zero measure and that $Y_0^{(N)}\to Y_0$ in distribution as $N\to\infty$ for some random variable $Y_0$. Then, as $N\to\infty$, the discrete dual process $Y^{(N)}$ with generator (\ref{YNgen}) converges in $D_{\nz_0}([0,\infty))$ to the process $Y$ with generator (\ref{Ygen}).
\end{theorem}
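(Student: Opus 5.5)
The plan is to exploit the remark preceding the theorem: $\mu_m^{(N)}=\mu_m$ for $m\in\{0,\ldots,N\}$. Hence the generator (\ref{YNgen}) agrees with (\ref{Ygen}) on $\{0,\ldots,N\}$. Moreover, $Y$ is a pure death process, so it never leaves $\{0,\ldots,n\}$ once it starts there. Thus $Y^{(N)}$ started at $m\le N$ has exactly the law of $Y$ started at $m$, for every $N\ge m$.

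The argument is then a coupling. First, I would check that $Y$ is well defined as a Markov chain on $\nz_0$. From state $n$ it jumps only downward, with finite total rate
\[
q(n):=\frac{1}{\mu_n}\sum_{k=1}^n\varphi(k,n)\mu_{n-k}=2\varphi(n),
\]
where the identity uses (\ref{momrec}). Since $\Lambda\ne 0$, we have $\mu_n>0$ for all $n$, and $\varphi(n)<\infty$. Starting from any $n$, the chain makes at most $n$ jumps, so there is no explosion, and it has a well-defined law $P_n$ on $D_{\nz_0}([0,\infty))$.

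Second, I would construct $Y^{(N)}$ as follows. Let $\nu_N$ and $\nu$ denote the laws of $Y_0^{(N)}$ and $Y_0$. The law of $Y^{(N)}$ is $\sum_{m\le N}\nu_N(\{m\})P_m$, because $Y^{(N)}$ started at $m$ evolves like $Y$ started at $m$ (both are death chains with identical rates on $\{0,\ldots,m\}$). The law of $Y$ is $\sum_m\nu(\{m\})P_m$. Since $\nz_0$ is discrete, convergence in distribution $Y_0^{(N)}\to Y_0$ means $\nu_N(\{m\})\to\nu(\{m\})$ for every $m$, and by Scheffé's lemma this gives $\|\nu_N-\nu\|_{TV}\to 0$. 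Therefore
\[
\|\mathcal L(Y^{(N)})-\mathcal L(Y)\|_{TV}\le\|\nu_N-\nu\|_{TV}\to 0.
\]
Total variation convergence on $D_{\nz_0}([0,\infty))$ implies weak convergence, which is the claim.

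As a fallback, one could instead invoke Ethier--Kurtz, Theorem 4.2.11 or Corollary 4.8.7. This route would use the fact that $L^{Y^{(N)}}f(m)=L^Yf(m)$ for $m\le N$ and all bounded $f$, so the generators converge on the core of finitely supported functions.

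The only delicate step is confirming that $Y^{(N)}$ and $Y$ have identical dynamics below level $N$. This needs $\mu_m^{(N)}=\mu_m$ for $m\le N$, which comes from (\ref{recursionhyper}) and (\ref{momrec}), together with the positivity $\mu_m>0$. Positivity holds because $\varphi(k,m)\ge 0$, at least one rate is positive when $\Lambda\ne 0$, and $\mu_0=1$; so by the recursion $\mu_m>0$ inductively. Once this is in place, the rest of the argument is routine.
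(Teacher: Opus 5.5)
Your argument is correct, and its main line differs from the paper's. Both proofs rest on the same observation: $\mu_m^{(N)}=\mu_m$ for $m\le N$, so (\ref{YNgen}) is just (\ref{Ygen}) restricted to $\{0,\ldots,N\}$.

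The paper then appeals to the Ethier--Kurtz generator-convergence theorems. It notes that $L^{Y^{(N)}}(\pi_Nf)=\pi_N(L^Yf)$ holds exactly, so the error in (\ref{normconv}) vanishes identically. This is essentially your fallback.

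You instead use the pure-death structure directly. Started at $m$, $Y$ lives on the finite set $\{0,\ldots,m\}$, so $Y^{(N)}$ started at $m\le N$ has law exactly $P_m$. The two path laws are then the mixtures $\sum_m\nu_N(\{m\})P_m$ and $\sum_m\nu(\{m\})P_m$.

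This buys two things.
\begin{itemize}
\item A stronger conclusion: total variation convergence on $D_{\nz_0}([0,\infty))$, with the explicit bound $\|\mathcal{L}(Y^{(N)})-\mathcal{L}(Y)\|_{TV}\le\|\nu_N-\nu\|_{TV}$.
\item No functional-analytic input beyond non-explosion.
\end{itemize}
The second point matters. The semigroup route needs a domain on which the closure of $L^Y$ generates a strongly continuous contraction semigroup. The paper leaves this implicit, and it is delicate here. The total rate $2\varphi(n)$ is typically unbounded, and $L^Y$ need not preserve $c_0$: for $\Lambda$ uniform the rate from $n$ to $0$ is $1+1/n$, so $L^Y\mathbf{1}_{\{0\}}(n)\to 1$.

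For the same reason, your fallback as phrased, with the finitely supported functions as a core, would need adjusting, for example by passing to the one-point compactification of $\nz_0$. Your main argument avoids the issue altogether.

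What the paper's route buys in return is uniformity of method. It runs parallel to the proof of Theorem \ref{main1} and stays within the generator-based framework of the commutative diagram.
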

The commutative diagram in Figure \ref{fig1} illustrates these convergence and duality results obtained so far.
\begin{figure}[ht]
\centering
\scaling
\caption{Commutative diagram for the process $X^{(N)}$ and the death process $Y^{(N)}$. The two right-arrows `$\rightarrow$' stand for convergence in $D_{[0,1]}([0,\infty))$ and $D_{\nz_0}([0,\infty))$ respectively as $N\to\infty$. The vertical updown-arrows `$\updownarrow$' stand for duality, on the left hand side with respect to the sampling duality kernel $D^{(N)}(n,m)=(n)_m/((N)_m\mu_m^{(N)})$ and on the right hand side with respect to the moment duality kernel $D(x,n)=x^n/\mu_n$.}
\label{fig1}
\end{figure}

\subsubsection{A more advanced duality result} \label{advanced}

We now turn to a more involved duality result. Having the particular duality statement in Proposition \ref{dualityspecial} for the uniform distribution $\Lambda=\beta(1,1)$ in mind, it is natural to consider for general finite measure $\Lambda$ the Markov process $Y$ with state space $\nz_0$ and generator
\begin{equation} \label{generatory}
   L^Yf(n)
   \ :=\ \sum_{k=1}^n\big(\varphi(k,n)+\varphi(n-k+1,n)\big) \big(f(n-k)-f(n)\big),
   \quad n\in\nz_0,
\end{equation}
where $\varphi(.,.)$ is defined via (\ref{lambda_rep}).
The following statement holds.
\begin{theorem}[Polynomial duality with dual process \eqref{generatory}] \label{duality}
   Let $\Lambda$ be a finite measure on $[0,1]$. Furthermore, let $r:=(r_n)_{n\in\nz_0}$ be a given sequence of real numbers. Then there exists a unique duality kernel $D=D_r:[0,1]\times\nz_0\to\rz$ (depending on the sequence $r$) satisfying the following three properties.
   \begin{enumerate}
      \item[(i)] The kernel $D$ is of polynomial form
         \begin{equation} \label{polynomial}
            D(x,n)\ =\ \sum_{j=0}^n d_{n,j}x^j,\qquad x\in[0,1], n\in\nz_0,
         \end{equation}
         for some real coefficients $d_{n,j}$, $n\in\nz_0$, $j\in\{0,\ldots,n\}$.
      \item[(ii)] The diagonal coefficients are given by $d_{n,n}=r_n$ for all $n\in\nz_0$.
      \item[(iii)] $X$ is dual to $Y$ with respect to $D$, where $Y$ is the process with generator (\ref{generatory}).
   \end{enumerate}
\end{theorem}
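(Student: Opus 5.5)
The plan is to read duality as the generator identity $L^X D(\cdot,n)(x)=L^Y D(x,\cdot)(n)$ for all $x\in[0,1]$ and $n\in\nz_0$, and to solve it for the coefficients $d_{n,j}$ degree by degree. Both sides are polynomials in $x$ of degree at most $n$, by Lemma \ref{generator_monomial} and the cone property. Matching the coefficients of $x^j$ turns the identity into a triangular linear system for the $d_{n,j}$. We then show that this system has exactly one solution once the diagonal $d_{n,n}=r_n$ is prescribed. Since the state space of $X$ is compact and $D(\cdot,n)$ is a polynomial, and since $Y$ is a pure death process that can only move downward, the generator identity will give duality of the processes. The standard argument applies (e.g. Ethier and Kurtz, Proposition 4.4.7 / Jansen--Kurt): the semigroup of $Y$ started at $n$ stays in the finite set $\{0,\ldots,n\}$, and all functions involved are bounded.

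Writing $\psi(k,n):=\varphi(k,n)+\varphi(n-k+1,n)$, Lemma \ref{generator_monomial} gives
\[
L^X D(\cdot,n)(x)=\sum_{j=0}^n d_{n,j}\Big(\sum_{k=1}^j\varphi(k,j)x^{j-k}-2\varphi(j)x^j\Big),
\qquad
L^Y D(x,\cdot)(n)=\sum_{k=1}^n\psi(k,n)\big(D(x,n-k)-D(x,n)\big).
\]
Let $\Psi(n):=\sum_{k=1}^n\psi(k,n)=2\varphi(n)$. This equality is the key observation: it is exactly why the symmetrized rates in (\ref{generatory}) are the right ones. Comparing the coefficients of $x^n$ on both sides gives $-2\varphi(n)d_{n,n}=-\Psi(n)d_{n,n}$. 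This holds automatically, so the diagonal really is free and can be set to $r_n$. For $j<n$, the coefficient of $x^j$ gives
\[
\big(2\varphi(n)-2\varphi(j)\big)d_{n,j}=\sum_{k=1}^{n-j}\psi(k,n)d_{n-k,j}-\sum_{i=j+1}^{n}d_{n,i}\varphi(i-j,i).
\]
We solve this by induction on $n$, and for fixed $n$ by downward induction on $j$. The right-hand side involves only the $d_{n,i}$ with $i>j$, which are already determined, and the $d_{m,j}$ with $m<n$, which are known by the outer induction.

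The main obstacle is the invertibility of the factor $\varphi(n)-\varphi(j)$ for $j<n$. If $\varphi$ is strictly increasing on $\nz_0$, both existence and uniqueness follow immediately. From (\ref{totalrate}), $\varphi(n)-\varphi(j)=d(n-j)+\int_{(0,1)}\frac{(1-t)^j-(1-t)^n}{t}\Lambda({\rm d}t)$, which is strictly positive unless $\Lambda$ is concentrated on $\{1\}$. That degenerate case has to be handled separately, and it includes the zero measure. There $\varphi(k,n)=\kappa\,1_{\{k=n\}}$, $\varphi(j)=\kappa$ for $j\ge1$ and $\varphi(0)=0$, so $\psi(k,n)=\kappa(1_{\{k=n\}}+1_{\{k=1\}})$, and the equations have to be checked by hand. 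For $j\ge1$ the left-hand side vanishes. We must then verify that the right-hand side vanishes as well, which amounts to a consistency condition. It should hold because both generators then act as $\kappa\big(f(0)+f(1)-2f\big)$ in the $X$-variable, and $\kappa(\text{jump to }n-1\text{ or }0)$ in the $Y$-variable. Uniqueness then has to come from another source. My first attempt would be to check whether, in this case, the system still pins down the $d_{n,j}$ through the equations with $j=0$ (where $\varphi(n)-\varphi(0)=\kappa\neq0$) combined with the other equations, or else to impose an explicit choice. I expect this degenerate-measure case, together with the zero measure, where every kernel works and uniqueness seems to fail, to be the delicate point. I would state carefully which conventions make the claim hold there.
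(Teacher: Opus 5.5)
Outside the degenerate case your argument is correct and is the paper's proof. You match coefficients of $x^j$, use $\sum_k\psi(k,n)=2\varphi(n)$ to see that the diagonal is free, and solve forward in $n$ and backward in $j$. You are in fact more careful than the paper. Its displayed recursion \eqref{recursion} should have $2d_{n,i}(\varphi(n)-\varphi(i))$ on the left, and the paper divides by this factor without checking that it is nonzero. Your positivity computation, together with $\varphi(n)-\varphi(0)=\varphi(n)>0$, supplies exactly the missing justification, and it is valid whenever $\Lambda([0,1))>0$.

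The gap is in your treatment of measures concentrated on $\{1\}$. Your expectation that the right-hand side vanishes there is false, and in fact the statement itself fails in that case.

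Take $\Lambda=\kappa\delta_1$ with $\kappa>0$, and let $n\ge 2$, $1\le j<n$. Then $\varphi(i-j,i)=0$, and only $k=1$ contributes to your first sum, with $\psi(1,n)=\kappa$. So your equation reads $0=\kappa\, d_{n-1,j}$.

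Concretely, at $n=2$ duality means $D(0,2)+D(1,2)=D(x,0)+D(x,1)=r_0+d_{1,0}+r_1x$ for all $x$, which forces $r_1=0$. Your heuristic misses this: after cancelling $-2\kappa D(x,n)$, the $X$-side is constant in $x$, while the $Y$-side contains the polynomial $D(\cdot,n-1)$.

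Running through all $n$ shows that a kernel with (i)--(iii) exists if and only if $r_m=0$ for all $m\ge 1$, and then it is $D\equiv r_0$. So for $\kappa>0$ the obstruction is existence, not uniqueness as you suspected. For $\Lambda=0$, by contrast, every polynomial kernel is a duality kernel, so uniqueness fails.

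No choice of conventions repairs this. The theorem needs the extra hypothesis $\Lambda([0,1))>0$, i.e. $\Lambda$ is not a multiple of $\delta_1$. Under that hypothesis your proof is complete, including your remark on passing from the generator identity to duality of the processes via the finiteness of $\{0,\ldots,n\}$.
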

\begin{remark} (Explicit polynomial duality).
   For general measure $\Lambda$ the coefficients $d_{n,j}$, $n\in\nz_0$, $j\in\{0,\ldots,n\}$, do not have a simple form. One exception is uniform distribution $\Lambda=\beta(1,1)$. In this case, choosing the diagonal entries as $d_{n,n}=n+1$ ($=:r_n$), the recursion (\ref{recursion}) yields $d_{n,j}=0$ for all $0\le j<n$, and, hence, Theorem \ref{duality} boils down to Proposition \ref{dualityspecial}. For $\Lambda\ne\beta(1,1)$ it is impossible to choose the diagonal entries $d_{n,n}$, $n\in\nz_0$, in such a way that all the off-diagonal entries $d_{n,j}$, $j<n$, vanish.
\end{remark}
\begin{remark} (Space of duality functions).
   Theorem \ref{duality} shows that for each sequence $r=(r_n)_{n\in\nz_0}$
   there exists a duality kernel $D=D_r$ such that $X$ is dual to $Y$ with respect to $D$. It can be expected that the kernels $D_{e_i}$, where $e_i$ denotes the $i$th unit vector in $\rz^{\nz_0}$, are linear independent elements of the Banach space $B([0,1]\times\nz_0)$ of all measurable, real-valued, bounded functions on $[0,1]\times\nz_0$. It is therefore conjectured that the space $\{D\in B([0,1]\times\nz_0)\,:\,\mbox{$X$ is dual to $Y$ with respect to $D$}\}$ of all duality kernels has infinite dimension.
\end{remark}

\subsubsection{The beta-mutation model} \label{beta}
We focus in this section on the particular subclass of models when the measure $\Lambda$ is the beta distribution $\beta(a,b)$ with parameters $a,b>0$. Clearly, all results obtained so far are applicable. For example, the generator $L^X$ of the limiting Markov process $X=(X_t)_{t\ge 0}$ in Theorem \ref{main1} acts on functions $f\in C^2([0,1])$ via
\begin{equation} \label{generator}
   L^Xf(x)
   \ =\ \frac{1}{{\rm B}(a,b)}\int_0^1
   \big(f(x(1-t)) + f(x(1-t)+t) - 2f(x)\big)\frac{t^{a-1}(1-t)^{b-1}}{t}\,{\rm d}t,
\end{equation}
where ${\rm B}(.,.)$ denotes the beta function. For obvious reasons we call this model the beta-mutation model. The rates (\ref{lambda_rep}) have the form
\begin{equation} \label{rate}
   \varphi_{a,b}(k,n)
   \ :=\ \binom{n}{k}\frac{{\rm B}(k+a-1,n-k+b)}{{\rm B}(a,b)},
   \qquad n\in\nz,k\in\{1,\ldots,n\}
\end{equation}
and the total rates (\ref{totalrate}) reduce to
\begin{eqnarray}
\label{total-rate-ab}
   \varphi_{a,b}(n)
   & = & \frac{1}{{\rm B}(a,b)}\int_0^1 \frac{1-(1-t)^n}{t}t^{a-1}(1-t)^{b-1}\,{\rm d}t\nonumber\\
   & = & \left\{
      \begin{array}{cl}
         \displaystyle\frac{{\rm B}(a-1,b)-{\rm B}(a-1,n+b)}{{\rm B}(a,b)} & \mbox{if $a\ne 1$,}\\
         b\big(\Psi(n+b)-\Psi(b)\big) & \mbox{if $a=1$,}
      \end{array}
   \right. \label{totalratebeta}
\end{eqnarray}
where $\Psi:=(\log\Gamma)'=\Gamma'/\Gamma$ denotes the digamma function
(logarithmic derivative of the gamma function). Note that ${\rm B}(a-1,b)-{\rm B}(a-1,n+b)\to\Psi(n+b)-\Psi(b)$ as $a\to 1+$. Thus, the formula for $a=1$ is the continuous extension of the formula for $a>1$. For $0<a<1$, the negative argument $a-1\in(-1,0)$ occurring in the beta function of the above formula does not cause problems, since ${\rm B}(p,q)$ is defined for all $p,q\in\rz\setminus\{\ldots,-2,-1,0\}$ by analytic continuation.

The map $\eta\mapsto\varphi_{a,b}(\eta)$, $\eta\ge 0$, is the Laplace exponent of a drift-free subordinator, whose L\'evy measure $\varrho$ has density $u\mapsto e^{-bu}(1-e^{-u})^{a-2}/{\rm B}(a,b)$, $u>0$, with respect to the Lebesgue measure on $(0,\infty)$.

For $(a,b)=(1,2s)$ with $s>0$, the process $X$ and the corresponding rates $\varphi_s(k,n):=\varphi_{1,2s}(k,n)$ play a certain role in the mathematical statistical physics literature on particle systems. We refer the reader to the work of Frassek, Giardin\`a and Kurchan \cite[Eq.~(2.30)]{FrassekGiardinaKurchan2020} and to further studies on this model \cite{FrassekGiardina2022,FranceschiniFrassekGiardina2023, CarinciFranceschiniFrassekGiardinaRedig2023, GiardinaRedigvanTol2025,CarinciFranceschiniGabrielliGiardinaTsagkarogiannis2024,RedigvanTol2026}. In this case the model is called `harmonic' because of the occurrence of the harmonic factor $t^{-1}$
below the integral in (\ref{generator}) and since $\varphi_{\frac{1}{2}}(k,n)=\varphi_{1,1}(k,n)=1/k$.

For $(a,b)=(1-\alpha,\alpha)$ for some $\alpha\in(0,1)$, the generator
$L^X$ occurs (up to a factor) as a mutation generator in the mathematical population genetics literature (see, for example, Handa \cite[Eq.~(1.3)]{Handa2014} with $c_1:=c_2:=1$).

The following proposition clarifies when the stationary distribution $\mu$ of the beta-mutation model is a beta distribution.
\begin{proposition}{(Stationary distribution of the beta-mutation model).} \label{betastat}
   Let $\Lambda=\beta(a,b)$ with $a,b>0$. Then the following two statements hold.
   \begin{enumerate}
      \item[(i)] The stationary distribution $\mu$ of $X$ is a beta distribution if and only if $a\in\{1,2\}$ or $a=b$. In this case $\mu$ is the symmetric $\beta(b/a,b/a)$-distribution.
      \item[(ii)] The stationary distribution $\mu$ of $X$ is reversible if and only if $a=1$.
   \end{enumerate}
\end{proposition}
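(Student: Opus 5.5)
For part (i) I will work entirely with the moment recursion (\ref{momrec}), which by compactness of $[0,1]$ characterizes $\mu$ uniquely. The first step is to rewrite its right-hand side probabilistically. Inserting (\ref{lambda_rep}) and summing the binomial series gives
\[
\sum_{k=1}^n\varphi(k,n)\mu_{n-k}\;=\;\int_{(0,1]}\Big(\me\big[(t+(1-t)Z)^n\big]-\me\big[((1-t)Z)^n\big]\Big)\frac{\Lambda({\rm d}t)}{t},
\]
where $Z\sim\mu$. Hence stationarity is just $\int L^Xf_n\,{\rm d}\mu=0$ for all monomials $f_n$.

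Since $L^Xf_1=(1-2x)\Lambda([0,1])$, we always have $\mu_1=1/2$. So if $\mu$ is a beta distribution, it must be symmetric, $\mu=\beta(c,c)$, with moments $\mu_n=(c)_n/(2c)_n$.

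\emph{Necessity.} I will compute $\mu_2$ and $\mu_3$ from (\ref{momrec}) using the explicit rates (\ref{rate}) and (\ref{totalratebeta}); these are rational in $a,b$ once $\varphi(n)$ is written as a finite sum for $n\le 3$.
\begin{itemize}
\item[$\bullet$] The value of $\mu_2$ determines $c$. I expect this to give $c=b/a$.
\item[$\bullet$] Imposing $\mu_3=(c)_3/(2c)_3$ then yields a polynomial identity in $a,b$. I expect it to factor as a nonzero multiple of $(a-1)(a-2)(a-b)$.
\end{itemize}
This algebra, and getting the factorization cleanly, is the main computational obstacle of (i).

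\emph{Sufficiency.} For each of the three cases I need $\int L^Xf_n\,{\rm d}\beta(b/a,b/a)=0$ for all $n$.
\begin{itemize}
\item[$\bullet$] For $a=1$ this will follow from the detailed-balance computation of part (ii), since reversibility implies stationarity.
\item[$\bullet$] For $a=2$ and for $a=b$, I will evaluate $\me[(t+(1-t)Z)^n]$ and $\me[((1-t)Z)^n]$ by expanding in $t$ and integrating against $t^{a-2}(1-t)^{b-1}$. This reduces the recursion to a terminating hypergeometric identity, which I will verify by Chu--Vandermonde or Pfaff--Saalschütz.
\end{itemize}
If that route is messy, my fallback is to verify the mgf equation (\ref{inteqn}) directly for the candidate $g$ (a confluent hypergeometric ${}_1F_1(c;2c;x)$) and invoke the converse part of Proposition \ref{intrep}.

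For part (ii) I will use detailed balance for the pure-jump generator (\ref{generator}). Write $J(x,{\rm d}y)$ for its jump kernel.
\begin{itemize}
\item[$\bullet$] A downward jump $x\to y=x(1-t)$ has, after the substitution $t=1-y/x$, density proportional to $((x-y)/x)^{a-2}(y/x)^{b-1}/x$.
\item[$\bullet$] An upward jump $y\to x=y(1-t)+t$ has, after the substitution $t=(x-y)/(1-y)$, density proportional to $((x-y)/(1-y))^{a-2}((1-x)/(1-y))^{b-1}/(1-y)$.
\end{itemize}
Reversibility means the measure $\mu({\rm d}x)J(x,{\rm d}y)$ is symmetric. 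Comparing the two densities on $\{y<x\}$ shows this forces $\mu$ to have a density $m$ satisfying
\[
\frac{m(x)}{m(y)}\;=\;\frac{x^{a+b-2}(1-x)^{b-1}}{(1-y)^{a+b-2}\,y^{b-1}}\qquad\text{for all } y<x .
\]
The right-hand side factors as $F(x)/F(y)$ only if $x^{a-1}(1-x)^{1-a}$ is constant, that is, only if $a=1$. So $a\ne1$ excludes reversibility.

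Conversely, for $a=1$ the relation holds with $m(x)\propto x^{b-1}(1-x)^{b-1}$. This gives reversibility of $\beta(b,b)$, and hence its stationarity, which also covers the case $a=1$ of (i).

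A small technical point remains: reversibility must be stated at the level of measures before densities are assumed. I will handle it by first deducing absolute continuity of $\mu$ from the symmetry of $\mu\otimes J$, since $J$ has Lebesgue densities.
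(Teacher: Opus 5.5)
\textbf{The necessity half of (i) fails at the third moment.} Imposing $\mu_3=(c)_3/(2c)_3$ gives no condition at all. The generator commutes with the reflection $x\mapsto 1-x$, because $1-x(1-t)=(1-x)(1-t)+t$. By uniqueness, $\mu$ is therefore symmetric about $\frac12$, and so is every $\beta(c,c)$. For any law symmetric about $\frac12$ one has $\me[(Z-\frac12)^3]=0$, i.e.\ $\mu_3=\frac32\mu_2-\frac14$. Once $\mu_2=\frac{a+b}{2(a+2b)}$ has forced $c=b/a$, the identity $\mu_3=\frac{2a+b}{4(a+2b)}=(c)_3/(2c)_3$ thus holds for \emph{every} $a,b>0$. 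The polynomial you expect to be a nonzero multiple of $(a-1)(a-2)(a-b)$ is identically zero, so your plan excludes no pair $(a,b)$.

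\textbf{The fix is to go one moment further.} The first moment that carries information is $\mu_4$. Push the recursion (\ref{momrec}) one step further, which needs $\varphi(k,4)$ for $k\le 4$ and $\varphi(4)$. Then compare with $(c)_4/(2c)_4=\frac{(2a+b)(3a+b)}{4(a+2b)(3a+2b)}$. This is what the paper does. With $\mu_4$ as in (\ref{firstfourmoments}), cross-multiplying the equation $\mu_4=(c)_4/(2c)_4$ leaves, up to sign, exactly $ab(a-1)(a-2)(a-b)=0$. So the factorization you anticipated does appear, but one moment later.

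\textbf{The rest of your plan is sound.}
\begin{itemize}
\item Your sufficiency argument for $a=2$ and $a=b$, via a terminating hypergeometric identity, is the same kind of direct check of (\ref{momrec}) that the paper carries out.
\item Your proof of (ii) by comparing jump densities is a genuinely different route, and a cleaner one. The paper instead tests $\langle L^Xf_3,f_1\rangle=\langle f_3,L^Xf_1\rangle$, which again requires $\mu_4$ and a heavy rational simplification. Your density relation settles both directions without any moment computation.
\item One point in (ii) needs care. Obtain the off-diagonal symmetry of $\mu({\rm d}x)J(x,{\rm d}y)$ from $\langle L^Xf,g\rangle=\langle f,L^Xg\rangle$ using smooth $f,g$ with disjoint supports. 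This is needed because $J(x,\cdot)$ has infinite mass when $a\le 1$. Only after that should you deduce absolute continuity of $\mu$.
\end{itemize}
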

\begin{example} (Duality for $\Lambda=\beta(1,b)$).
   Assume that $a=1$. By Proposition \ref{betastat} (i), the stationary
   distribution $\mu$ of $X$ is the beta distribution $\beta(b,b)$ having moments
   \[
   \mu_n\ :=\ \frac{{\rm B}(n+b,b)}{{\rm B}(b,b)}
   \ =\ \frac{\Gamma(2b)\Gamma(n+b)}{\Gamma(b)\Gamma(n+2b)},
   \qquad n\in\nz_0.
   \]
   By Theorem \ref{main2}, the process $X$ is dual to $Y$ with respect to the duality kernel $D$ defined via
   \[
   D(x,n)\ =\ \frac{x^n}{\mu_n}
   \ =\ \frac{\Gamma(b)\Gamma(n+2b)}{\Gamma(2b)\Gamma(n+b)}x^n,\qquad x\in[0,1],n\in\nz_0,
   \]
   where $Y$ is a death process with state space $\nz_0$, whose generator $L^Y$ acts on bounded functions $f:\nz_0\to\rz$ via
   \[
   L^Yf(n)\ :=\ \frac{n!\Gamma(n+2b)}{(\Gamma(n+b))^2}
   \sum_{k=1}^n\frac{(\Gamma(n-k+b))^2}{k(n-k)!\Gamma(n-k+2b)}
   \big(f(n-k)-f(n)\big),\qquad n\in\nz_0.
   \]
\end{example}
\begin{remark} (Spectral decomposition).
   In the reversible case, i.e. the $\beta(1,b)$-model, it is possible to give a spectral decomposition for the generator of the process $X^{(N)}$, as well as for the process $X$. The eigenvectors are given by Hahn orthogonal polynomials in the first case, and Jacobi orthogonal polynomials in the second case. Because $a=1$ is the only reversible case, this class is essentially the only model where the spectral decomposition can be obtained explicitly. The spectral decomposition immediately entails a closed formula for the semigroup of the processes. We shall report more about this in a future work.
\end{remark}
The remaining part of this section deals with the $\beta(a,b)$-mutation model for the case $a+b=1$. Define the generalized Stieltjes transform $S_b$ of the stationary distribution $\mu$ of the process $X$ via $S_b(t):=\int (1+tx)^{-b}\,\mu({\rm d}x)$ for all $t\ge 0$.
\begin{lemma}[$a+b=1$, Stieltjes transform] \label{stieltjes}
   If $a+b=1$, then the generalized Stieltjes transform $S_b$ of the stationary distribution $\mu$ is given by
   \begin{equation} \label{S}
      S_b(t)\ =\ \frac{2}{1+(1+t)^b},\qquad t\ge 0.
   \end{equation}
\end{lemma}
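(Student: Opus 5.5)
The plan is to test the stationarity identity $\int L^Xf\,{\rm d}\mu=0$ against the family $f_s(x):=(1+sx)^{-b}$, $s\ge 0$. Since these functions are smooth on $[0,1]$, the identity holds for them by \cite[Proposition 9.2]{EthierKurtz2005}, exactly as it was used for monomials before (\ref{momrec}). With $a=1-b$, the measure $\Lambda({\rm d}t)/t$ has density $t^{-b-1}(1-t)^{b-1}/{\rm B}(1-b,b)$. By (\ref{generator}), $L^Xf_s(x)$ is then the sum of two integrals, $I_1$ and $I_2$. The first,
\[
I_1\ :=\ \int_0^1\Big((1+sx(1-t))^{-b}-(1+sx)^{-b}\Big)\,t^{-b-1}(1-t)^{b-1}\,{\rm d}t ,
\]
comes from the jump $x\mapsto x(1-t)$. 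The second, $I_2$, is the analogous integral with $(1+sx+st(1-x))^{-b}$ in place of $(1+sx(1-t))^{-b}$, and comes from $x\mapsto x(1-t)+t$. The target is to show that, up to the positive constant $1/{\rm B}(1-b,b)$,
\[
L^Xf_s(x)\ =\ c(s)\Big(2-\big(1+(1+s)^b\big)f_s(x)\Big)
\]
for some $c(s)\neq 0$ (possibly plus a term whose $\mu$-integral vanishes by another application of stationarity). Integrating against $\mu$ then gives $S_b(s)\,(1+(1+s)^b)=2$, which is (\ref{S}).

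To evaluate $I_1$, write $1+sx(1-t)=(1+sx)(1-zt)$ with $z:=sx/(1+sx)\in[0,1)$. This gives
\[
I_1\ =\ (1+sx)^{-b}\int_0^1\big((1-zt)^{-b}-1\big)\,t^{-b-1}(1-t)^{b-1}\,{\rm d}t .
\]
The remaining integral is a regularised Euler integral of hypergeometric type with parameters $(b,-b;0)$, the degenerate case in which $a+b=1$ is exactly what makes the exponents match. I would evaluate it by expanding $(1-zt)^{-b}-1=\sum_{j\ge1}\frac{(b)_j}{j!}z^jt^j$ and integrating termwise, which gives ${\rm B}(j-b,b)=\Gamma(j-b)\Gamma(b)/\Gamma(j)$. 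The series $\sum_{j\ge1}\frac{(b)_j\Gamma(j-b)}{j!\,\Gamma(j)}z^j$ should collapse in closed form, via $\Gamma(j-b)=\Gamma(1-b)(1-b)_{j-1}$ and a binomial identity, to an expression in $(1-z)^{\pm b}$, i.e.\ in $(1+sx)^{\pm b}$.

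For $I_2$, write $1+sx+st(1-x)=(1+sx)(1+wt)$ with $w:=s(1-x)/(1+sx)$, and treat it the same way with $(1+wt)^{-b}$ in place of $(1-zt)^{-b}$. The key observation is that $1+w=(1+s)/(1+sx)$. Hence terms of the form $(1+w)^{b}(1+sx)^{-b}$ produce exactly the factor $(1+s)^b(1+sx)^{-2b}$, or after simplification the factor $(1+s)^bf_s(x)$. This is presumably how $1+(1+s)^b$ enters.

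The main obstacle is this closed-form evaluation. Individually the integrals in $I_1$ and $I_2$ diverge at $t=0$ like $t^{-b}$, so only the combination is finite. Moreover, the hypergeometric functions at the degenerate parameter value must simplify into elementary powers, and the pieces must combine into the stated linear form in $f_s(x)$ rather than producing terms like $(1+sx)^{-2b}$ that do not integrate to $S_b$. If a direct combination fails, I would fall back on moments. Expand $S_b(s)=\sum_n\binom{-b}{n}s^n\mu_n$ for small $s$ and show that the coefficients of $2/(1+(1+s)^b)$ satisfy the moment recursion (\ref{momrec}) with the rates (\ref{rate}) and total rates (\ref{totalratebeta}) at $a=1-b$. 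Uniqueness of the solution of (\ref{momrec}) then identifies the moments, and analytic continuation in $s$ extends (\ref{S}) to all $t\ge0$.
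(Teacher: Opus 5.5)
Your overall strategy is sound: apply $\int L^Xg\,{\rm d}\mu=0$ to a test function $g$ whose image under $L^X$ is affine in $f_s$. The problem is that $g=f_s=(1+s\,\cdot)^{-b}$ does not have this property, so the key step fails.

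Put $J(z):=\int_0^1\big((1-zt)^{-b}-1\big)t^{-b-1}(1-t)^{b-1}\,{\rm d}t$, so that $I_1=(1+sx)^{-b}J(z)$ and $I_2=(1+sx)^{-b}J(-w)$. Your termwise integration is fine and gives $J(z)=b\,\Gamma(b)\Gamma(1-b)\,z\,{}_2F_1(1+b,1-b;2;z)$. This does not reduce to powers of $1-z$; for $b=\tfrac12$ it is a combination of complete elliptic integrals of the first and second kind.

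Moreover, no choice of $c(s)$ can work. Evaluating your target at $x=0$ and at $x=1$ gives $J(-s)=c(s)\big(1-(1+s)^b\big)$ and $(1+s)^{-b}J\big(\tfrac{s}{1+s}\big)=c(s)\big(1-(1+s)^{-b}\big)$. Together these force $J\big(\tfrac{s}{1+s}\big)=-J(-s)$. But $J(z)=b\,\Gamma(b)\Gamma(1-b)\big(z+\tfrac{1-b^2}{2}z^2+O(z^3)\big)$, so the $s^2$-coefficients of the two sides are, up to the same constant, $-\tfrac{1+b^2}{2}$ and $-\tfrac{1-b^2}{2}$.

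The escape clause ``plus a term whose $\mu$-integral vanishes'' is empty unless you exhibit that term as $L^Xg$ for an explicit $g$. The moment fallback only restates the problem. Proving that the coefficients of $2/(1+(1+s)^b)$, divided by $\binom{-b}{n}$, satisfy (\ref{momrec}) with the rates (\ref{rate}) is the whole difficulty, and you give no mechanism for it. The paper goes the other way, deriving the moments in Proposition \ref{specialmoments} from (\ref{S}).

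The missing idea is to choose the test function so that the Euler integral becomes elementary precisely because $a+b=1$. This rests on Handa's identity $\int_0^1u^{a-1}(1-u)^{b-1}(1+\lambda u)^{-1}\,{\rm d}u={\rm B}(a,b)(1+\lambda)^{-a}$ for $\lambda>-1$, which is ${\rm B}(a,b)\,{}_2F_1(1,a;a+b;-\lambda)$ with $a+b=1$. The paper applies stationarity to $G_s(x)=(1+sx)^{-1}$. The identity makes $L^XG_s$ explicit, and $\int L^XG_s\,{\rm d}\mu=0$ becomes the linear ODE $\big((1+s)^{1-b}+1+s\big)S_b'(s)=-bS_b(s)$ with $S_b(0)=1$, whose solution is (\ref{S}).

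Alternatively, your own plan goes through verbatim if you replace $f_s$ by $g_s(x):=\log(1+sx)$. Integrating Handa's identity in $\lambda$ gives $\int_0^1\log(1+\lambda u)\,u^{a-2}(1-u)^{b-1}\,{\rm d}u=\frac{{\rm B}(a,b)}{b}\big((1+\lambda)^b-1\big)$. With your substitutions, using $1-z=(1+sx)^{-1}$ and $1+w=(1+s)/(1+sx)$, this yields
\[
L^Xg_s(x)\ =\ \frac{1}{b}\Big(\big(1+(1+s)^b\big)f_s(x)-2\Big),
\]
so $\int L^Xg_s\,{\rm d}\mu=0$ gives $\big(1+(1+s)^b\big)S_b(s)=2$ at once.
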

\begin{lemma}[$a+b=1$, stationary distribution] \label{aplusbequal1}
   If $a+b=1$, then the stationary distribution $\mu$ of the process $X$ has a density $f_\mu$ with respect to the Lebesgue measure on $(0,1)$, which is given by
   \begin{equation} \label{density}
      f_\mu(x)\ =\ \frac{\textcolor{black}{b}}{\pi i x^2}
      \int_C \frac{1}{(1+w)^{1-\textcolor{black}{b}}(1+\frac{w}{x})^{1-\textcolor{black}{b}}\big(1+(1+\frac{w}{x})^{\textcolor{black}{b}}\big)^2}
      \,{\rm d}w,\qquad x\in(0,1),
   \end{equation}
   where $C$ is a counterclockwise contour that starts and ends at the point $w=-1$ and encloses the origin.
\end{lemma}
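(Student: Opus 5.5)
The density will be obtained from the generalized Stieltjes transform in Lemma \ref{stieltjes} by Stieltjes--Perron inversion. First, $S_b(t)=2/(1+(1+t)^b)$ continues analytically in $t$ to $\mathbb{C}\setminus(-\infty,-1]$, since $(1+t)^b$ uses the principal branch. The same holds for $\int(1+tx)^{-b}\mu({\rm d}x)$, because $\mu$ is supported on $[0,1]$. Both functions agree for $t\ge0$, so they agree on this cut plane.

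Next, I change to the variable $z=-1/t$. This gives
\[
\int(1+tx)^{-b}\mu({\rm d}x)=z^{b}\int (z-x)^{-b}\mu({\rm d}x)
\]
with suitable branches, so the function $G(z):=\int(z-x)^{-b}\mu({\rm d}x)$ equals $z^{-b}S_b(-1/z)$ off $[0,1]$. Since $0<b<1$, the kernel $(z-x)^{-b}$ is not a Cauchy kernel. Therefore a plain jump of $G$ across $(0,1)$ does not give the density directly. It gives a Riemann--Liouville fractional integral: up to the factor $\sin(\pi b)$,
\[
\frac{1}{2i}\big(G(y-i0)-G(y+i0)\big)=\sin(\pi b)\int_0^y (y-x)^{-b}\,\mu({\rm d}x),\qquad y\in(0,1).
\]
To recover $f_\mu$, I apply the fractional derivative of order $1-b$. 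Equivalently, I convolve with $(\cdot)^{b-1}$ and differentiate once, using $B(b,1-b)=\pi/\sin(\pi b)$. This step uses $a+b=1$ only in the sense that $1-b=a$.

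A cleaner route, which I will follow, starts from the identity
\[
(1+tx)^{-b}=\frac{1}{B(b,1-b)}\int_0^\infty s^{-b}(1+s)^{b-1}\ldots
\]
or, more directly, from writing the density as a Hankel-type contour integral. I expect the final formula to come out as follows. Express $x^{-2}$ times a derivative of the jump as a contour integral in $w$ around the branch cut of $(1+w)^{b-1}$ starting at $w=-1$. Substitute $t=w/x$ in $S_b$. The factor $(1+w/x)^{b-1}(1+(1+w/x)^b)^{-2}$ is exactly $\frac{1}{2b}\frac{d}{dt}\big(-S_b(t)\big)$ evaluated at $t=w/x$, since
\[
S_b'(t)=-\frac{2b(1+t)^{b-1}}{(1+(1+t)^b)^2}.
\]
So the plan is to verify the claimed formula directly. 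I will compute, for $s\ge0$, the quantity $\int_0^1(1+sx)^{-b}f_\mu(x)\,{\rm d}x$ with $f_\mu$ defined by (\ref{density}). I then interchange the $x$- and $w$-integrals and evaluate the inner integral in $x$ by residues or a beta integral; this should reproduce $S_b(s)$. Uniqueness then follows because $S_b$ on $[0,\infty)$ determines $\mu$. Indeed, its Taylor coefficients at $0$ are proportional to the moments $\mu_n$, with nonzero factors $(b)_n$, and moments determine measures on $[0,1]$.

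The main obstacle is the analytic bookkeeping. This includes choosing the branches so that the contour $C$ avoids the cut of $(1+w/x)^b$ and the zeros of $1+(1+w/x)^b$. It also includes justifying the interchange of integrals near the endpoint $w=-1$, where $(1+w)^{b-1}$ is integrable since $b>0$. Finally, one must show that the resulting $f_\mu$ is real and nonnegative, or at least integrates to $1$. The normalization follows from $S_b(0)=1$, and nonnegativity is automatic once $\int f_\mu=\mu$ is established via uniqueness. I would first carry out the collapse of $C$ onto the segment $[-1,0]$ explicitly, which turns (\ref{density}) into a real integral of the jump of $S_b'$. From there the fractional-derivative inversion above becomes a direct check.
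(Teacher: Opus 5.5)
\textbf{The gap.} The decisive step is missing: an inversion argument that actually links the contour integral in (\ref{density}) to $\mu$. The route you commit to cannot supply it. For fixed $w\in C$, the inner integral
\[
\int_0^1(1+sx)^{-b}\,x^{-2}\Big(1+\frac{w}{x}\Big)^{b-1}\Big(1+\Big(1+\frac{w}{x}\Big)^{b}\Big)^{-2}\,{\rm d}x
\]
is not a beta integral, because the last factor rules that out. There is also no closed contour in $x$ on which residues could be used.

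The only way to produce beta-type integrals is to insert $S_b'(t)=-b\int\xi(1+t\xi)^{-b-1}\,\mu({\rm d}\xi)$. But then interchanging with $\mu({\rm d}\xi)$ is illegitimate. For $x\neq\xi$,
\[
(1+w)^{b-1}\Big(1+\frac{w\xi}{x}\Big)^{-b-1}\ =\ \frac{1}{b(1-\xi/x)}\,\frac{{\rm d}}{{\rm d}w}\bigg[(1+w)^{b}\Big(1+\frac{w\xi}{x}\Big)^{-b}\bigg],
\]
and the bracket vanishes at both endpoints $w=-1$ of $C$. So this kernel is identically zero off the diagonal, and swapping the integrals would \emph{prove} $f_\mu\equiv 0$. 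All the content sits in the singularity at $(w,x)=(-1,\xi)$, which is not absolutely integrable.

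You therefore need a genuine inversion theorem for the generalized Stieltjes transform. The paper imports one. With $y=1/x$ it writes $S_b(z)=\int_1^\infty(y+z)^{-b}F(y)\,{\rm d}y$ with $F(y)=y^{b-2}f_\mu(1/y)$. It then applies Schwarz's inversion formula $F(y)=-\frac{1}{2\pi i}y^{b}\int_C(1+w)^{b-1}S_b'(yw)\,{\rm d}w$, substitutes back, and inserts $S_b'$; that last step is the identification you already made.

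\textbf{How your first idea closes it.} Your fractional-integral idea does close the gap without any citation. It even proves that $\mu$ has a density on $(0,1)$, which the paper's argument takes for granted. But it must be set up correctly and carried out.

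With the principal branch used for $S_b$, the jump across its cut is a \emph{right}-sided fractional integral: for $\eta\in(0,1)$,
\[
J_0(\eta):=\frac{1}{2i}\Big(S_b\Big(-\frac{1}{\eta}-i0\Big)-S_b\Big(-\frac{1}{\eta}+i0\Big)\Big)=\sin(\pi b)\,\eta^{b}\int_{(\eta,1]}(x-\eta)^{-b}\,\mu({\rm d}x).
\]
Fubini together with ${\rm B}(b,1-b)=\pi/\sin(\pi b)$ then gives
\[
\mu((y,1])=\frac{1}{\pi}\int_y^1(\eta-y)^{b-1}\eta^{-b}J_0(\eta)\,{\rm d}\eta.
\]
Substitute $\eta=yv$ and use $J_0(1)=0$ (since $S_b$ is continuous at $t=-1$). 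This yields
\[
-\frac{{\rm d}}{{\rm d}y}\mu((y,1])=-\frac{1}{\pi y}\int_y^1(\eta-y)^{b-1}\eta^{1-b}J_0'(\eta)\,{\rm d}\eta.
\]

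On the other side, the integrand of (\ref{density}) is analytic off $(-\infty,-x]$; the origin plays no role. So $C$ collapses onto $[-1,-x]$, not $[-1,0]$, and the singularity at $w=-x$ is integrable. Now substitute $w=-x/\eta$ and use
\[
J_0'(\eta)=\eta^{-2}\,\frac{1}{2i}\Big(S_b'\Big(-\frac{1}{\eta}-i0\Big)-S_b'\Big(-\frac{1}{\eta}+i0\Big)\Big).
\]
This turns (\ref{density}) into exactly the same expression as above. That computation is the direct check your last sentence alludes to. As written, the proposal leaves it undone and relies instead on the verification step, which does not go through.
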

\begin{remark} (Contour integral).
   The contour $C$ could be chosen to be the counterclockwise unit circle for instance. The function below the integral in (\ref{density}) has three singularities, one at $w=-1$, one at $w=-x$ and another one at $w=x(e^{i\pi /b}-1)$. We leave a further evaluation of the contour integral in (\ref{density}) for future studies.
\end{remark}
The following proposition provides formulas for the moments $\mu_n:=\int x^n\mu({\rm d}x)$, $n\in\nz_0$, of the stationary distribution of the process $X$.
\begin{proposition}[$a+b=1$, moments]  \label{specialmoments}
   If $a+b=1$ then the moments of the stationary distribution $\mu$ are given by
   \begin{equation} \label{mom}
   \mu_n
    = \frac{1}{\binom{-b}{n}}
        \sum_{k=1}^n\frac{1}{2^k}\sum_{\ell=1}^k(-1)^\ell\binom{k}{\ell}
        \binom{b\ell}{n}
    = \frac{1}{(-b)_n}\sum_{k=1}^n\frac{(-1)^k}{2^k}k!
        \sum_{j=1}^n s(n,j)b^jS(j,k),\ n\in\nz,
   \end{equation}
   where $s(.,.)$ and $S(.,.)$ denote the Stirling numbers of the first and second kind respectively.
\end{proposition}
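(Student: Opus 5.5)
The plan is to extract the moments from the generalized Stieltjes transform in Lemma~\ref{stieltjes}. For $|t|<1$ and $x\in[0,1]$ we have the binomial expansion $(1+tx)^{-b}=\sum_{n\ge0}\binom{-b}{n}t^nx^n$, which converges uniformly in $x$. Integrating term by term against $\mu$ gives
\[
S_b(t)=\sum_{n\ge 0}\binom{-b}{n}\mu_n t^n,\qquad |t|<1 .
\]
The right-hand side of (\ref{S}) is analytic near $t=0$, so $\mu_n$ can be read off as $1/\binom{-b}{n}$ times its $n$-th Taylor coefficient. Here $\binom{-b}{n}\neq0$ because $b>0$.

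Next we expand $2/(1+(1+t)^b)$ in powers of $t$. Put $u:=(1+t)^b-1$, which is small near $t=0$. Then
\[
\frac{2}{1+(1+t)^b}=\frac{2}{2+u}=\frac{1}{1+u/2}=\sum_{k\ge0}\frac{(-1)^k}{2^k}u^k .
\]
Since $u=bt+O(t^2)$, the power $u^k$ contributes to $t^n$ only when $k\le n$. This makes the $n$-th coefficient a finite sum over $k\in\{0,\ldots,n\}$, and the $k=0$ term contributes nothing for $n\ge1$. We then expand $u^k$ binomially:
\[
u^k=\sum_{\ell=0}^k\binom{k}{\ell}(-1)^{k-\ell}(1+t)^{b\ell},
\qquad
[t^n]\,u^k=\sum_{\ell=0}^k\binom{k}{\ell}(-1)^{k-\ell}\binom{b\ell}{n}.
\]
For $n\ge1$ the $\ell=0$ term vanishes. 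The factors $(-1)^k$ and $(-1)^{k-\ell}$ combine to $(-1)^{\ell}$ (because $(-1)^{2k}=1$). This yields the first expression in (\ref{mom}). The interchange of sums is legitimate because everything is a finite sum at fixed order in $t$.

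For the Stirling-number form we compute the same coefficient differently. Write $(1+t)^b=e^{b\log(1+t)}$, so $u=e^{bL}-1$ with $L:=\log(1+t)$. Then use the two standard generating functions
\[
\frac{(e^{y}-1)^k}{k!}=\sum_{j\ge k}S(j,k)\frac{y^j}{j!},
\qquad
\frac{(\log(1+t))^j}{j!}=\sum_{n\ge j}s(n,j)\frac{t^n}{n!},
\]
with signed Stirling numbers of the first kind. Setting $y=bL$ gives $[t^n]u^k=\frac{k!}{n!}\sum_{j}S(j,k)b^j s(n,j)$. Substituting this into $\mu_n=\binom{-b}{n}^{-1}\sum_k(-1)^k2^{-k}[t^n]u^k$ and using $\binom{-b}{n}=(-b)_n/n!$ cancels the $n!$. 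The result is the second formula in (\ref{mom}), where $(-b)_n$ is read as the descending factorial, consistent with $h_m^{(N)}$.

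The main obstacle is bookkeeping rather than substance. The real work sits in Lemma~\ref{stieltjes}, which we are allowed to assume. What needs care is the signs, the convention for $(\cdot)_n$ and for the signed $s(n,j)$, and the justification that the series expansion of $S_b$ matches the moment series. The last point follows because both sides are analytic on $|t|<1$ and agree for $t\in[0,1)$.
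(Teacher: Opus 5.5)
Your proposal is correct and follows the paper's route. Both expand $S_b(t)=2/(1+(1+t)^b)$ as a geometric series in $((1+t)^b-1)/2$ and match coefficients against $\sum_n\binom{-b}{n}\mu_n t^n$. The differences are only in execution. Your direct binomial expansion of $((1+t)^b-1)^k$ yields the single sum immediately, so it bypasses the paper's multinomial sum and the cited identity (\ref{multisum}); in fact your computation proves that identity. For the Stirling form you use exponential generating functions where the paper applies $(x)_n=\sum_j s(n,j)x^j$ and $\sum_\ell(-1)^{k-\ell}\binom{k}{\ell}\ell^j=k!\,S(j,k)$ to the first formula.
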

\begin{remark} (Relation to generalized Stirling numbers)
   The sum over $j$ arising in (\ref{mom}) is well-known to be of the form
   $\sum_{j=1}^n s(n,j)b^jS(j,k)=b^nS(n,k;1/b,1,0)=b^kS(n,k;1,b,0)$,   
   where $S(n,k;a,b,r)$ denote the generalized Stirling numbers studied by Hsu and Shiue \cite{HsuShiue1998}. Thus, the $n$th moment is also given by
   $\mu_n=\frac{1}{(-b)_n}\sum_{k=1}^n(-\frac{b}{2})^kk!S(n,k;1,b,0)$, $n\in\nz$.
\end{remark}
\begin{remark} (Additional comments).
   The first four moments are $\mu_1=\frac{1}{2}$, $\mu_2=\frac{1}{2(1+b)}$,
   $\mu_3:=\frac{2-b}{4(1+b)}$ and $\mu_4:=\frac{3(2-b^2)}{2(b+1)(b+2)(b+3)}$. Except for $b=\frac{1}{2}$, the fourth moment does not coincide with the fourth moment of the $\beta(b/a,b/a)$-distribution, showing that for $b\ne\frac{1}{2}$ the stationary distribution cannot be a beta distribution. As expected, for $b\to0$ one obtains $\mu_n=\frac{1}{2}$ for all $n\in\nz$ corresponding to the stationary distribution $\mu=\frac{1}{2}\delta_0+\frac{1}{2}\delta_1$. Similarly, for $b\to 1$ one obtains $\mu_n=2^{-n}$ for all $n\in\nz$ corresponding to $\mu=\delta_{\frac{1}{2}}$.
\end{remark}
The results of this section are illustrated in Figure \ref{figure2}.
\begin{figure}[ht]
\centering
\betapicture
\caption{A graphical illustration of the results for the case when $\Lambda=\beta(a,b)$ is the beta distribution with parameters $a,b>0$. If $a=1$ then $\beta(b,b)$ is reversible. If $a+b=1$ then $\mathbb{E}(1/(1+tX_{\infty})^b)=2/(1+(1+t)^b)$.}
\label{figure2}
\end{figure}

\subsection{Proofs} \label{proofs}

\begin{proof}[Proof of Proposition \ref{equivalent}]
   Let us first verify the equivalence of (i) and (ii).

   \vspace{2mm}

   (i) $\Rightarrow$ (ii): If the array $(\varphi(k,n))_{n\in\nz,k\in\{1,\ldots,n\}}$ satisfies the consistency relations (\ref{consis}), then the modified numbers $\lambda(k,n):=\varphi(k,n)/\binom{n}{k}$ obviously satisfy the modified consistency relations $\lambda(k,n)=\lambda(k,n+1)+\lambda(k+1,n+1)$, $n\in\nz$, $k\in\{1,\ldots,n\}$. Now proceed as in the proof of Pitman \cite[Lemma 18]{Pitman1999}. If $\varphi(1,1)=0$, then (\ref{lambda_rep}) holds with $\Lambda:\equiv 0$. Assume now that $\varphi(1,1)>0$ and define $\mu(i,j):=\lambda(i+1,i+j+1)/\lambda(1,1)$ for all $i,j\in\nz_0$. Then, $\mu(0,0)=1$ and, for all $i,j\in\nz_0$,
   \begin{eqnarray*}
      \mu(i+1,j)+\mu(i,j+1)
      & = & \frac{\lambda(i+2,i+j+2)+\lambda(i+1,i+j+2)}{\lambda(1,1)}\\
      & = & \frac{\lambda(i+1,i+j+1)}{\lambda(1,1)}\ =\ \mu(i,j).
   \end{eqnarray*}
   De Finetti's representation for infinite exchangeable sequences of Bernoulli random variables implies that there exists a random variable $S$ taking values in $[0,1]$ such that $\mu(i,j)=\me(S^i(1-S)^j)$ for all $i,j\in\nz_0$. Thus, for all $n\in\nz$ and $k\in\{1,\ldots,n\}$, $\lambda(k,n)=\lambda(1,1)\mu(k-1,n-k)=\lambda(1,1)\me(S^{k-1}(1-S)^{n-k})$. Hence, (\ref{lambda_rep}) holds with $\Lambda:=\lambda(1,1)\pr_S$, where $\pr_S$ denotes the distribution of $S$.

   \vspace{2mm}

   (ii) $\Rightarrow$ (i): Conversely, assume that (\ref{lambda_rep}) holds for some finite measure $\Lambda$ on $[0,1]$. Then, a straightforward calculation shows that the numbers $\lambda(k,n):=\varphi(k,n)/\binom{n}{k}=\int_{[0,1]}t^{k-1}(1-t)^{n-k}\,\Lambda({\rm d}t)$ satisfy the relations $\lambda(k,n)=\lambda(k,n+1)+\lambda(k+1,n+1)$, $n\in\nz$, $k\in\{1,\ldots,n\}$. Multiplication with $\binom{n}{k}$ shows that the numbers $\varphi(n,k)$ satisfy the consistency relations (\ref{consis}).

   \vspace{2mm}

   We now verify the equivalence of (i) and (iii).

   \vspace{2mm}

   (i) $\Leftrightarrow$ (iii): We consider the generator $L$ acting on bounded test functions $f:\nz_0^2\to\rz$ via (\ref{twodimgen}) and show that the consistency relations (\ref{consis}) for the rates are equivalent to
   \begin{equation} \label{3}
      [L,A]\ =\ 0,
   \end{equation}
   where $A$ denotes the annihilation operator acting as
   \begin{equation} \label{4}
      A f(n_1,n_2)\ =\ n_1 f(n_1-1,n_2) + n_2 f(n_1,n_2-1).
   \end{equation}
   We will need the following:
   \begin{eqnarray} \label{5}
      Lf(n_1-1,n_2)
      & = & \sum_{k=1}^{n_1-1}\varphi(k,n_1-1)
            \big(f(n_1-1-k,n_2+k)-f(n_1-1,n_2)\big)\nonumber\\
      & + & \sum_{k=1}^{n_2}\varphi(k,n_2)
            \big(f(n_1-1+k,n_2-k)-f(n_1-1,n_2)\big),
   \end{eqnarray}
   \begin{eqnarray} \label{6}
      Lf(n_1,n_2-1)
      & = & \sum_{k=1}^{n_1}\varphi(k,n_1)
            \big(f(n_1-k,n_2-1+k)-f(n_1,n_2-1)\big)\nonumber\\
      & + & \sum_{k=1}^{n_2-1}\varphi(k,n_2-1)
            \big(f(n_1+k,n_2-1-k) - f(n_1,n_2-1)\big),
   \end{eqnarray}
   as well as
   \begin{equation} \label{7}
      Af(n_1-k,n_2+k)\ =\ (n_1-k)f(n_1-k-1,n_2+k)+(n_2+k)f(n_1-k,n_2+k-1)
   \end{equation}
   and
   \begin{equation} \label{8}
      Af(n_1+k,n_2-k)\ =\ (n_1+k)f(n_1+k-1,n_2-k)+(n_2-k) f(n_1+k,n_2-k-1).
   \end{equation}
   We start by computing the action of $LA$. Using \eqref{twodimgen} we have
   \begin{eqnarray} \label{9}
      LAf(n_1,n_2)
      & = & \sum_{k=1}^{n_1}\varphi(k,n_1)
            \big(Af(n_1-k,n_2+k)-Af(n_1,n_2)\big)\nonumber\\
      & + & \sum_{k=1}^{n_2}\varphi(k,n_2)
            \big(Af(n_1+k,n_2-k)-Af(n_1,n_2)\big).
   \end{eqnarray}
   Inserting \eqref{4}, \eqref{7} and \eqref{8} into the previous equation, we get
   \begin{eqnarray} \label{10}
      LAf(n_1,n_2)
      & = & \sum_{k=1}^{n_1}\varphi(k,n_1)(n_1-k)f(n_1-k-1,n_2+k)\nonumber\\
      & + & \sum_{k=1}^{n_1}\varphi(k,n_1)(n_2+k) f(n_1-k,n_2+k-1)\nonumber\\
      & - & \sum_{k=1}^{n_1}\varphi(k,n_1)
            \big(n_1f(n_1-1,n_2)+n_2f(n_1,n_2-1)\big)\nonumber\\
      & + & \sum_{k=1}^{n_2}\varphi(k,n_2)(n_1+k)f(n_1+k-1,n_2-k)\nonumber\\
      & + & \sum_{k=1}^{n_2}\varphi(k,n_2)(n_2-k)f(n_1+k,n_2-k-1)\nonumber\\
      & - & \sum_{k=1}^{n_2}\varphi(k,n_2)
            \big( n_1 f(n_1-1,n_2) + n_2 f(n_1,n_2-1)\big).
   \end{eqnarray}
   We now compute the action of $AL$. Using \eqref{4} we may write
   \begin{equation} \label{11}
      ALf(n_1,n_2)\ =\ n_1Lf(n_1-1,n_2)+n_2Lf(n_1,n_2-1).
   \end{equation}
   Inserting \eqref{5} and \eqref{6} into the previous equation, we get
   \begin{eqnarray} \label{12}
      ALf(n_1,n_2)
      & = & n_1\sum_{k=1}^{n_1-1}\varphi(k,n_1-1)
            \big(f(n_1-1-k,n_2+k)-f(n_1-1,n_2)\big)\nonumber\\
      & + & n_1\sum_{k=1}^{n_2}\varphi(k,n_2)
            \big(f(n_1-1+k,n_2-k)-f(n_1-1,n_2)\big)\nonumber\\
      & + & n_2\sum_{k=1}^{n_1}\varphi(k,n_1)
            \big(f(n_1-k,n_2-1+k)-f(n_1,n_2-1)\big)\nonumber\\
      & + & n_2\sum_{k=1}^{n_2-1}\varphi(k,n_2-1)
            \big(f(n_1+k,n_2-1-k)-f(n_1,n_2-1)\big).
   \end{eqnarray}
   Now we compare \eqref{10} with \eqref{12}. We fix $k$ with $2\le k\le n_1$ and compare the coefficients of $f(n_1-k,n_2+k-1)$. We get the equation
   \begin{equation} \label{13}
      \varphi(k,n_1)(n_2+k)+\varphi(k-1,n_1)(n_1-k+1)
      \ =\ n_1\varphi(k-1,n_1-1)+n_2\varphi(k,n_1).
   \end{equation}
   The term $n_2\varphi(k,n_1)$ drops and we may rewrite the previous equation as
   \begin{equation} \label{14}
      \varphi(k-1,n_1-1)
      \ =\ \bigg(1-\frac{k-1}{n_1}\bigg)\varphi(k-1,n_1)
      +\frac{k}{n_1}\varphi(k,n_1),\qquad 2\le k \le n_1.
   \end{equation}
   If we now define $k':=k-1$ and $n_1':=n_1-1$, then we have
   \begin{equation} \label{15}
      \varphi(k',n_1')
      \ =\ \bigg(1-\frac{k'}{n_1'+1}\bigg)\varphi(k',n_1'+1)
           +\frac{k'+1}{n_1'+1}\varphi(k'+1,n_1'+1),\qquad 1\le k' \le n_1',
   \end{equation}
   which is the consistency equation \eqref{consis}.

   \vspace{2mm}

   It remains to verify the equivalence of (i) and (iv).
   We essentially proceed as in the proof of the equivalence of (i) and (iii), but nevertheless provide the details because of the occurrence of the additional parameter $N$ in the calculations.

   \vspace{2mm}

   (i) $\Leftrightarrow$ (iv): For $N\in\nz$ we consider the generator $L^{(N)}:=L^{X^{(N)}}$ acting on test functions $f:E_N\to\rz$ via \eqref{onedimgen} and show that the consistency relations \eqref{consis} for the rates are equivalent to
   \begin{equation} \label{18}
      [L^{(N)},A^{(N)}]\ =\ 0\quad\mbox{for all $N\in\nz$},
   \end{equation}
   where $A^{(N)}$ denotes the annihilation operator acting as
   \begin{equation} \label{19}
      A^{(N)}f(n)\ =\ nf(n-1)+(N-n)f(n).
   \end{equation}
   We will need the following:
   \begin{eqnarray} \label{20}
      L^{(N)}f(n-1)
      & = & \sum_{k=1}^{n-1}\varphi(k,n-1)
            \big(f(n-1-k)-f(n-1)\big)\nonumber\\
      & + & \sum_{k=1}^{N-n+1}\varphi(k,N-n+1)
            \big(f(n-1+k)-f(n-1)\big),
   \end{eqnarray}
   as well as
   \begin{equation} \label{21}
      A^{(N)}f(n-k)\ =\ (n-k)f(n-k-1)+(N-n+k)f(n-k)
   \end{equation}
   and
   \begin{equation} \label{22}
      A^{(N)}f(n+k)\ =\ (n+k)f(n+k-1)+(N-n-k)f(n+k).
   \end{equation}
   We start by computing the action of  $L^{(N)}A^{(N)}$. Using \eqref{onedimgen} we have
   \begin{eqnarray} \label{23}
      L^{(N)}A^{(N)}f(n)
      & = & \sum_{k=1}^n\varphi(k,n)\big(A^{(N)}f(n-k)-A^{(N)}f(n)\big)\nonumber\\
      & + & \sum_{k=1}^{N-n} \varphi(k,N-n)\big(A^{(N)}f(n+k)-A^{(N)}f(n)\big).
   \end{eqnarray}
   Inserting \eqref{19}, \eqref{21} and \eqref{22} into the previous equation, we get
   \begin{eqnarray} \label{24}
      L^{(N)}A^{(N)}f(n)
      & = & \sum_{k=1}^n\varphi(k,n)(n-k)f(n-k-1)\nonumber\\
      & + & \sum_{k=1}^n\varphi(k,n)(N-n+k)f(n-k)\nonumber\\
      & - & \sum_{k=1}^n\varphi(k,n)\big(nf(n-1)+(N-n)f(n)\big)\nonumber\\
      & + & \sum_{k=1}^{N-n}\varphi(k,N-n)(n+k)f(n+k-1)\nonumber\\
      & + & \sum_{k=1}^{N-n}\varphi(k,N-n)(N-n-k)f(n+k)\nonumber\\
      & - & \sum_{k=1}^{N-n}\varphi(k,N-n)\big(nf(n-1)+(N-n)f(n)\big).
   \end{eqnarray}
   We now compute the action of $A^{(N)}L^{(N)}$. Using \eqref{19} we may write
   \begin{equation} \label{25}
      A^{(N)}L^{(N)}f(n)\ =\ nL^{(N)}f(n-1)+(N-n)L^{(N)}f(n).
   \end{equation}
   Inserting \eqref{onedimgen} and \eqref{20} into the previous equation, we get
   \begin{eqnarray} \label{26}
      A^{(N)}L^{(N)}f(n)
      & = & n\sum_{k=1}^{n-1}\varphi(k,n-1)
            \big(f(n-1-k)-f(n-1)\big)\nonumber\\
      & + & n\sum_{k=1}^{N-n+1}\varphi(k,N-n+1)
            \big(f(n-1+k)-f(n-1)\big)\nonumber\\
      & + & (N-n)\sum_{k=1}^{n}\varphi(k,n)
            \big(f(n-k)-f(n)\big)\nonumber\\
      & + & (N-n)\sum_{k=1}^{N-n}\varphi(k,N-n)
            \big(f(n+k)-f(n)\big).
   \end{eqnarray}
   Now we compare \eqref{24} with \eqref{26}. We fix $k$ with $2\le k\le n$ and compare the coefficients of $f(n-k)$. We get the equation
   \begin{equation}
      \varphi(k,n)(N-n+k)+\varphi(k-1,n)(n-k+1)
      \ =\ n\varphi(k-1,n-1)+(N-n)\varphi(k,n).
   \end{equation}
   The term $(N-n)\varphi(k,n)$ drops and we may rewrite the previous equation as
   \begin{equation}
      \varphi(k-1,n-1)
      \ =\ \bigg(1-\frac{k-1}{n}\bigg)\varphi(k-1,n)
           +\frac{k}{n}\varphi(k,n),\qquad 2\le k\le n.
   \end{equation}
   If we now define $k':=k-1$ and $n':=n-1$, then we have
   \begin{equation}
      \varphi(k',n')
      \ =\ \bigg(1-\frac{k'}{n'+1}\bigg)\varphi(k',n'+1)
           +\frac{k'+1}{n'+1}\varphi(k'+1,n'+1),\qquad 1\le k'\le n',
   \end{equation}
   which is the consistency equation \eqref{consis}.
\end{proof}
\begin{proof}[Proof of Theorem \ref{main1}]
   Let us first verify that the space $D$ of all polynomials $p:[0,1]\to\rz$ is a core for the generator $L:=L^X$. Clearly, $D$ is dense in the space $H$ of all continuous functions $f:[0,1]\to\rz$. For $n\in\nz_0$ let $D_n$ denote the space of all polynomials of degree less than or equal to $n$. Then $D_0,D_1,\ldots$ is a sequence of finite-dimensional subspaces of $D$ with $\bigcup_{n\in\nz_0}D_n=D$. From (\ref{monomial}) it follows that $LD_n\subseteq D_n$ for all $n\in\nz_0$. We have $(\lambda-L)(D_n)=D_n$ for all $\lambda$ not belonging to the set of eigenvalues of $L|D_n$, i.e. for all but at most finitely many $\lambda>0$. Thus, $(\lambda-L)(D)=(\lambda-L)(\bigcup_{n\in\nz_0}D_n)=\bigcup_{n\in\nz_0} D_n=D$ is dense in $H$ for all but at most countably many $\lambda>0$. By \cite[p.~17, Proposition 3.1]{EthierKurtz2005}, $D$ is a core for $L$.

   Let us now turn to the generator $L^{X^{(N)}}$ of the process $X^{(N)}$.
   Recall that $d:=\Lambda(\{0\})$. For $n\in\nz_0$ and $p\in[0,1]$ let $K_{n,p}$ be a random variable having a binomial distribution with parameters $n$ and $p$. Plugging (\ref{lambda_rep}) into (\ref{onedimgen}), a straightforward calculation shows that, for all $N\in\nz$, $n\in\{0,\ldots,N\}$ and all functions $f:\{0,\ldots,N\}\to\rz$,
   \begin{eqnarray} \label{gen_rep}
      &   & \hspace{-15mm}L^{X^{(N)}}f(n)
      \ = \ dn\big(f(n-1)-f(n)\big)
          +\int_{(0,1]}\frac{\me\big(f(K_{n,1-t})\big)-f(n)}{t}\,\Lambda({\rm d}t)\nonumber\\
      &   & +\ d(N-n)\big(f(n+1)-f(n)\big)
          +\int_{(0,1]}\frac{\me\big(f(N-K_{N-n,1-t})\big)-f(n)}{t}\,\Lambda({\rm d}t).
   \end{eqnarray}
   Now, let $L_N$ denote the generator of the space-scaled process $X^{(N)}/N$ having state space $E_N:=\{j/N:j\in\{0,\ldots,N\}\}$. For any function $f:[0,1]\to\rz$ let the function $\pi_Nf:E_N\to\rz$ be defined by $\pi_Nf(x):=f(x)$ for all $x\in E_N$. To prove convergence in $D_{[0,1]}([0,\infty))$, it suffices by standard theory on weak convergence of Markov processes (see, for example, Ethier and Kurtz \cite[p.~28, Theorem 6.1, p.~28 and p.~232, Corollary 8.7]{EthierKurtz2005} to verify that, for every $f\in D$,
   \begin{equation} \label{uniconv}
      \|L_N(\pi_Nf)-\pi_N(Lf)\|\ :=\
      \sup_{x\in E_N}|L_N(\pi_Nf)(x)-Lf(x)|\ \to\ 0,\qquad N\to\infty.
   \end{equation}
   The involved operators are linear. It hence suffices to verify (\ref{uniconv}) for every monomial $f=f_n$, $n\in\nz_0$, defined via $f_n(x):=x^n$ for all $x\in[0,1]$. In the following (\ref{uniconv}) is even verified for all monotone functions $f\in C^2([0,1])$. Applying the space-scaling $x=n/N$ to (\ref{gen_rep}) it follows for all $N\in\nz$ and $x\in E_N$ that
   \begin{eqnarray*}
      &   & \hspace{-15mm}L_N(\pi_Nf)(x)
      \ = \ dxN\bigg(f\bigg(x-\frac{1}{N}\bigg)-f(x)\bigg)
            + \int_{(0,1]}
            \frac{\me\big(f(\frac{K_{xN,1-t}}{N})\big)-f(x)}{t}\,\Lambda({\rm d}t)\\
      &   & +\ d(1-x)N\bigg(f\bigg(x+\frac{1}{N}\bigg)-f(x)\bigg)
            + \int_{(0,1]}
            \frac{\me\big(f(1-\frac{K_{(1-x)N,1-t}}{N})\big)-f(x)}{t}\,\Lambda({\rm d}t).
   \end{eqnarray*}
   Therefore, for all $N\in\nz$ and $x\in E_N$,
   \begin{eqnarray*}
      |L_N(\pi_Nf)(x)-Lf(x)|
      & \le & dx\bigg|N\bigg(f\bigg(x-\frac{1}{N}\bigg)-f(x)\bigg)+f'(x)\bigg|\\
      &   & + \bigg|\int_{(0,1]}\frac{\me\big(f(\frac{K_{xN,1-t}}{N})\big)-f\big(x(1-t)\big)}{t}\,\Lambda({\rm d}t)\bigg|\\
      &   & + d(1-x) \bigg|N\bigg(f\bigg(x+\frac{1}{N}\bigg)-f(x)\bigg)-f'(x)\bigg|\\
      &   & + \bigg|\int_{(0,1]}\frac{\me\big(f(1-\frac{K_{(1-x)N,1-t}}{N})\big)
      -f\big(1-(1-x)(1-t)\big)}{t}\,\Lambda({\rm d}t)\bigg|.
   \end{eqnarray*}
   Uniformly over $x\in E_N$, all four terms on the right-hand side above converge to $0$ as $N\to\infty$: For the first and the third term this uniform convergence to $0$ is readily verified. For the second and the fourth term the pointwise convergence to $0$ follows from $K_{n,p}/n\to p$ in distribution as $n\to\infty$ and by dominated convergence. Since $f$ is assumed to be monotone, the maps $x\mapsto\me(f(K_{\lfloor xN\rfloor,1-t}/N))$ and $x\mapsto\me(f(1-K_{\lfloor(1-x)N\rfloor,1-t}/N))$ are monotone in $x\in[0,1]$. Thus, the convergence holds even uniformly for all $x\in[0,1]$ by P\'olya's \cite{Polya1920} classical theorem on convergence of monotone sequences of functions.
\end{proof}
\begin{proof}[Proof of Lemma \ref{generator_monomial}]
   In this proof all functions inside integrals are understood to be continuously extended at $t=0$.
   For all $n\in\nz$ and $x\in[0,1]$,
   \begin{eqnarray*}
      L^Xf_n(x)
      & = & \int_{[0,1]}\big(x^n(1-t)^n+(x(1-t)+t)^n-2x^n\big)
            \,\frac{\Lambda({\rm d}t)}{t}\\
         & = & \int_{[0,1]}
            \bigg(
               x^n(1-t)^n + \sum_{k=0}^n \binom{n}{k}t^k\big(x(1-t)\big)^{n-k} - 2x^n
            \bigg)\,\frac{\Lambda({\rm d}t)}{t}\\
      & = & \int_{[0,1]}
            \bigg(
               2x^n(1-t)^n + \sum_{k=1}^n \binom{n}{k}t^k\big(x(1-t)\big)^{n-k} - 2x^n
            \bigg)\,\frac{\Lambda({\rm d}t)}{t}\\
      & = & 2x^n\int_{[0,1]}\big((1-t)^n-1\big)
            \,\frac{\Lambda({\rm d}t)}{t}
            + \sum_{k=1}^n \binom{n}{k}x^{n-k}
            \int_{[0,1]}t^k(1-t)^{n-k}\,\frac{\Lambda({\rm d}t)}{t}\\
      & = & -2\varphi(n)x^n + \sum_{k=1}^n \varphi(k,n)x^{n-k}
   \end{eqnarray*}
   with $\varphi(k,n)$ and $\varphi(n)$ as defined in the statement of the lemma.
\end{proof}
\begin{proof}[Proof of Proposition \ref{intrep}]
   We have $2\varphi_n\mu_n=\sum_{k=1}^n \mu_{n-k}\varphi(k,n)$, $n\in\nz$. Multiplying this equation with $x^n/n!$ and summing over all $n\in\nz$ yields
   \[
   2\sum_{n=1}^\infty \mu_n\frac{x^n}{n!}\varphi(n)
   \ =\ \sum_{n=1}^\infty\frac{x^n}{n!}\sum_{k=1}^n\mu_{n-k}\varphi(k,n).
   \]
   The sum on the left-hand side is equal to
   \begin{eqnarray*}
      &   & \hspace{-15mm}\sum_{n=1}^\infty \mu_n\frac{x^n}{n!}
      \bigg(nd+\int_{(0,1]}\frac{1-(1-t)^n}{t}\,\Lambda({\rm d}t)\bigg)\\
      & = & d\sum_{n=1}^\infty \mu_n\frac{x^n}{(n-1)!}
            + \int_{(0,1]} \frac{1}{t}\sum_{n=1}^\infty \mu_n \big(1-(1-t)^n\big)\frac{x^n}{n!}\,\Lambda({\rm d}t)   \\
      & = & dx\frac{{\rm d}}{{\rm d}x}\sum_{n=1}^\infty \mu_n\frac{x^n}{n!}
            + \int_{(0,1]}\frac{1}{t}\sum_{n=1}^\infty \mu_n\frac{x^n-(x(1-t))^n}{n!}\,\Lambda({\rm d}t)
            \\
      & = & dxg'(x)+\int_{(0,1]}\frac{g(x)-g(x(1-t))}{t}\,\Lambda({\rm d}t).
   \end{eqnarray*}
   The right-hand side is equal to
   \begin{eqnarray*}
      &   & \hspace{-15mm}\sum_{n=1}^\infty \frac{x^n}{n!}\sum_{k=1}^n
            \mu_{n-k}\binom{n}{k}
            \int_{[0,1]}t^{k-1}(1-t)^{n-k}\,\Lambda({\rm d}t)\\
      & = & \int_{[0,1]}\sum_{k=1}^\infty\frac{x^kt^{k-1}}{k!}
            \sum_{n=k}^\infty \mu_{n-k}\frac{(x(1-t))^{n-k}}{(n-k)!}\,\Lambda({\rm d}t)\\
      & = & dxg(x) + \int_{(0,1]}\frac{e^{xt}-1}{t}g(x(1-t))\,\Lambda({\rm d}t),
   \end{eqnarray*}
   which yields (\ref{inteqn}). Conversely, assume that (\ref{inteqn}) holds for all $x\in[0,1]$, where $g$ is the mgf of some probability measure $Q$ on $[0,1]$. Expanding all expressions below the integrals in terms of powers of $x$ we can express both integrals in terms of the moments of $Q$. This yields that the moments of $Q$ satisfy the recursion (\ref{momrec}). Thus, $Q=\mu$ and, hence, $g$ is the mgf of $\mu$.
\end{proof}
\begin{proof}[Proof of Proposition \ref{dualityspecial}]
   Let $x\in[0,1]$ and $n\in\nz_0$. By the linearity of $L^X$ and (\ref{monomial}),
   \[
   L^XD(.,n)(x)
   \ =\ (n+1)L^Xf_n(x)
   \ =\ -2(n+1)h_nx^n + (n+1)\sum_{k=1}^n\frac{x^{n-k}}{k},
   \]
   where $h_n:=\Psi(n+1)-\Psi(1)=\sum_{i=1}^n 1/i$ denotes the $n$th harmonic number. On the other hand, by the definition of the generator $L^Y$,
   \begin{eqnarray*}
      &   & \hspace{-10mm}L^YD(x,.)(n)
      \ = \ \sum_{k=1}^n
            \big((n-k+1)x^{n-k}-(n+1)x^n\big)
            \underbrace{\bigg(\frac{1}{k}+\frac{1}{n-k+1}\bigg)}_{=\frac{n+1}{k(n-k+1)}}\\
      & = & -(n+1)x^n\sum_{k=1}^n\bigg(\frac{1}{k}+\frac{1}{n-k+1}\bigg)
            + \sum_{k=1}^n
            (n-k+1)x^{n-k}\frac{n+1}{k(n-k+1)}\\
      & = & -2(n+1)h_nx^n + (n+1)\sum_{k=1}^n \frac{x^{n-k}}{k}.
   \end{eqnarray*}
   Thus, $L^XD(.,n)(x)=L^YD(x,.)(n)$ for all $x\in[0,1]$ and $n\in\nz_0$, showing that $X$ is dual to $Y$ with respect to $D$.
\end{proof}
\begin{proof}[Proof of Theorem \ref{main2}]
   The linearity of the generator $L^X$ and its action on the $n$th monomial $f_n$ (see Lemma \ref{generator_monomial}) yields
   \begin{eqnarray*}
      L^XD(.,n)(x)
      & = & \frac{L^Xf_n(x)}{\mu_n}
      \ = \ \frac{1}{\mu_n}\sum_{k=1}^n \varphi(k,n)x^{n-k}
            -2\varphi(n)\frac{x^n}{\mu_n}\\
      & = & \frac{1}{\mu_n}\sum_{k=1}^n\varphi(k,n)\mu_{n-k}D(x,n-k)
            -2\varphi(n)D(x,n).
   \end{eqnarray*}
   Thanks to the recursion (\ref{momrec}) for the moments $\mu_n$, this expression can be rewritten as
   \begin{eqnarray*}
      L^XD(.,n)(x)
      & = & \frac{1}{\mu_n}\sum_{k=1}^n\varphi(k,n)\mu_{n-k}D(x,n-k)
            -\frac{1}{\mu_n}\sum_{k=1}^n\varphi(k,n)\mu_{n-k}D(x,n)\\
      & = & \frac{1}{\mu_n}\sum_{k=1}^n \mu_{n-k}\varphi(k,n) \big(D(x,n-k)-D(x,n)\big)
      \ = \ L^Y D(x,.)(n),
   \end{eqnarray*}
   where $Y$ is a continuous-time death process with state space $\nz_0$ and generator (\ref{Ygen})
   Thus, $L^XD(.,n)(x)=L^YD(x,.)(n)$ for all $x\in[0,1]$ and $n\in\nz_0$,
   showing that $X$ is dual to $Y$ with respect to $D$.
\end{proof}
In the following, as in the proof of Theorem \ref{main1}, let $K_{n,p}$ denote a random variable having a binomial distribution with parameters $n\in\nz$ and $p\in[0,1]$. For the proof of Lemma \ref{generator_hypergeometric}, the following \emph{sampling duality} result for binomial distributions is needed.
\begin{lemma}[Sampling duality for the Binomial distribution] \label{aux}
   For all $N\in\nz$, $p\in[0,1]$ and $n,m\in\{0,\ldots,N\}$,
   \begin{equation}
      \me\bigg(\frac{(n+K_{N-n,p})_m}{(N)_m}\bigg)\ =\ \me\bigg(\frac{(n)_{m-K_{m,p}}}{(N)_{m-K_{m,p}}}\bigg).
   \end{equation}
\end{lemma}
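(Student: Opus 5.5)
The plan is to give a probabilistic urn/sampling proof, with the generating function identity $\binom{n+K}{m}$ versus $\binom{n}{m-J}$ as a fallback. Multiplying both sides by $(N)_m$ is not directly useful, because the denominators differ on the two sides. So I interpret each side as a probability.

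Since $(j)_m/(N)_m$ is the probability that $m$ draws without replacement from an urn of $N$ balls, $j$ of them white, are all white, the left-hand side has the following meaning. Start with $N$ balls: $n$ white and $N-n$ black. Recolour each black ball independently white with probability $p$. Then draw $m$ balls without replacement from the $N$. The left-hand side is the probability that all $m$ drawn balls are white after recolouring.

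Now condition instead on the sample first. Choose the $m$ balls uniformly at random. Among them, each ball is white after recolouring if it was originally white, or if it was black and was recoloured (probability $p$). The recolouring coins are independent of the sampling, so I attach a coin to each of the $m$ sampled positions. Each coin lands ``success'' with probability $p$, and the number of successes is $K_{m,p}$. For originally white balls the coin is irrelevant, but it may be flipped anyway.

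The event ``all $m$ sampled balls are white after recolouring'' then becomes: every sampled ball whose coin failed must be originally white. The $m-K_{m,p}$ balls with failed coins form a uniform random subset of the $N$ balls of size $m-K_{m,p}$, because the coins are exchangeable and independent of positions. Conditionally on $K_{m,p}=k$, the probability that all of them are originally white is $(n)_{m-k}/(N)_{m-k}$. Averaging over $k$ gives the right-hand side.

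The main care point is justifying that coins indexed by balls can be replaced by coins indexed by sample positions. To do this, fix i.i.d.\ Bernoulli$(p)$ labels $B_1,\ldots,B_N$ on the balls, independent of a uniformly random ordered sample. The labels of the sampled balls are then i.i.d.\ Bernoulli$(p)$ and independent of which balls were sampled. The set of sampled balls with $B=0$ is, given its size, uniformly distributed among subsets of that size. This follows by symmetry, since the joint law is invariant under permutations of the balls.

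If one prefers algebra, expand $\binom{n+K}{m}=\sum_j\binom{n}{m-j}\binom{K}{j}$ (Vandermonde) and use $\me\binom{K_{N-n,p}}{j}=\binom{N-n}{j}p^j$. Then compare with $\binom{N}{m}^{-1}\sum_j\binom{m}{j}p^j(1-p)^{m-j}\binom{n}{m-j}\binom{N}{m-j}^{-1}\binom{N}{m}$. This reduces to a polynomial identity in $p$, which I would verify coefficientwise. I expect the combinatorial argument above to be cleaner.
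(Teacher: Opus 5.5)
Your argument is correct and uses the same double-counting idea as the paper: both compute the probability that all $m$ sampled individuals are white (survive) after the Bernoulli$(p)$ thinning, once by thinning first and once by sampling first. Your second computation is in fact more careful than the paper's, which asserts that the number of surviving sampled $B$'s (a binomial thinning of a hypergeometric count) has the law of $K_{m,p}$; that is false in general, since for $n=N$ this number vanishes identically while $K_{m,p}$ does not when $p>0$, $m\ge 1$. Flipping a coin at every sampled position, including the originally white ones, and using permutation symmetry to see that the failed-coin set is a uniform subset of its size, is exactly what legitimately produces $K_{m,p}$ and the factor $(n)_{m-k}/(N)_{m-k}$.
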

\begin{proof}
   Consider a population of size $N$ made of $n$ individuals of type $A$ and $N-n$ individuals of type $B$. Reproduction works as follows: each $A$ always survives (equivalently, it produces exactly one descendant); each $B$ survives independently with probability $p$ (equivalently, it produces one offspring with probability $p$ and has no offspring with probability $1-p$). Thus, the number of $B$’s that survive is described by the binomial random variable $K_{N-n,p}$. After the reproduction step, the next generation contains $n+K_{N-n,p}$ individuals.

   Consider sampling $m$ individuals from the original population of size $N$. The result follows by computing the probability of the event
   $E:=\{$all sampled individuals have descendants after the reproduction step$\}$ in the following two ways.
   \begin{itemize}
      \item\emph{Thinning followed by sampling:} after thinning, $A$’s all
         survive, while exactly $K_{N-n,p}$ $B$’s survive. So there are $n+K_{N-n,p}$ individuals whose line survives. Given $K_{N-n,p}$, the probability that a sample of $m$ individuals from the $N$ original individuals all belong to this surviving set is
         $(n+K_{N-n,p})_m/(N)_m$. Taking expectation yields
         \begin{equation}
            \mathbb{P}(E)\ =\
            \mathbb{E}\left(\frac{(n+K_{N-n,p})_m}{(N)_m}\right).
         \end{equation}
      \item\emph{Sampling followed by thinning:} we now reverse the order
         and first sample $m$ individuals. Let $Z$ denote the number of $B$’s in
         the sample. Then $Z$ has a hypergeometric distribution with parameters $N$, $N-n$ and $m$. Among these $Z$ sampled $B$'s, each survives independently with probability $p$. Let $Y$ be the number of sampled $B$’s that survive. Conditional on $Z$, we have $Y|Z\stackrel{d}{=}\text{B}(Z,p)$. A standard property of binomial thinning gives
        $Y\stackrel{d}{=}K_{m,p}$, i.e. the hypergeometric–binomial mixture collapses to a binomial distribution. For all sampled $m$ individuals to survive, the remaining $m-Y$ individuals must correspond to $A$ individuals. The probability that the $m-K_{m,p}$ surviving $A$'s come from the $n$ $A$'s among the $N$ individuals is $(n)_{m-K_{m,p}}/(N)_{m-K_{m,p}}$. Taking expectation yields
        \begin{equation}
           \mathbb{P}(E)\ =\ \me\left(\frac{(n)_{m -K_{m,p}}}{(N)_{m-K_{m,p}}}\right).
        \end{equation}
   \end{itemize}
    The proof is complete.
\end{proof}
\begin{remark} (Sampling duality).
   Lemma \ref{aux} can be stated in terms of Markov chains as follows. Let $B_j^{(n)}$, $j\in\nz,n\in\nz_0$, be iid Bernoulli random variables with parameter $p\in[0,1]$. Fix $N\in\nz$. Given two initial variables $X_0$ and $Y_0$ taking values in $\{0,\ldots,N\}$, define two Markov chains $X:=(X_n)_{n\in\nz_0}$ and $Y:=(Y_n)_{n\in\nz_0}$ recursively via
   \[
   X_{n+1}\ :=\ X_n+\sum_{j=1}^{N-X_n}B_j^{(n)}
   \qquad\mbox{and}\qquad
   Y_{n+1}\ :=\ Y_n-\sum_{j=1}^{Y_n}B_j^{(n)},\qquad n\in\nz_0.
   \]
   Lemma \ref{aux} implies that $X$ is dual to $Y$ with respect to the sampling duality kernel $H_N:\{0,\ldots,N\}^2\to[0,1]$ defined via $H_N(n,m):=(n)_m/(N)_m$, $n,m\in\{0,\ldots,N\}$.
\color{black}
   Similar duality result are known for forward and backward processes in exchangeable Cannings population models
   (see, for example, \cite[Theorem 3.1]{Moehle1999} and can be traced back to works of Cannings \cite{Cannings1974} and Gladstien \cite{Gladstien1976,Gladstien1977,Gladstien1978}, where sampling duality results are stated in elementary matrix notation without knowing the notion of duality. For a more recent related work using sampling duality techniques we refer the reader to
   Gonz\'alez Casanova and Span\`o \cite{GonzalezCasanovaSpano2018}.
\color{blue}
\end{remark}
\color{black}
\begin{proof}[Proof of Lemma \ref{generator_hypergeometric}]
   We have
   \[
   L^{X^{(N)}}h_m^{(N)}(n)
      \ =\ \sum_{k=1}^n\big(h_m^{(N)}(n-k)-h_m^{(N)}(n)\big)\varphi(k,n)
            +\sum_{k=1}^{N-n}\big(h_m^{(N)}(n+k)-h_m^{(N)}(n)\big)\varphi(k,N-n).
   \]
   The proof of Lemma \ref{generator_hypergeometric} can hence be split into the proof of
   \begin{equation} \label{first}
      \sum_{k=1}^n\big(h_m^{(N)}(n-k)-h_m^{(N)}(n)\big)\varphi(k,n)
      \ =\ -\varphi(m)h_m^{(N)}(n)
   \end{equation}
   and of
   \begin{equation} \label{second}
      \sum_{k=1}^{N-n}\big(h_m^{(N)}(n+k)-h_m^{(N)}(n)\big)\varphi(k,N-n)
      \ =\ \sum_{k=1}^m\varphi(k,m)h_{m-k}^{(N)}(n)-\varphi(m)h_m^{(N)}(n).
   \end{equation}
   As in the proof of Theorem \ref{main1}, let $K_{n,p}$ denote a random variable having a binomial distribution with parameters $n\in\nz$ and $p\in[0,1]$. Note that $K_{n,p}$ has descending factorial moments $\me((K_{n,p})_m)=p^m(n)_m$, $m\in\nz_0$. Division by $(N)_m$ shows that $\me(h_m^{(N)}(K_{n,p}))=p^mh_m^{(N)}(n)$, $m\in\nz_0$. Note furthermore that $n-K_{n,p}$ has the same distribution as $K_{n,1-p}$. Thus,
   \begin{equation} \label{binmom}
      \me\big(h_m^{(N)}(n-K_{n,p})\big)
      \ =\ \me\big(h_m^{(N)}(K_{n,1-p})\big)\ =\ (1-p)^mh_m^{(N)}(n),
      \qquad m\in\nz_0.
   \end{equation}
   In the following all functions inside integrals are understood to be continuously extended at $t=0$.
   Using the definition of $\varphi(k,n)$ and (\ref{binmom}), the left hand side of (\ref{first}) simplifies to
   \begin{eqnarray*}
      &   & \hspace{-15mm}\sum_{k=1}^n
            \big(h_m^{(N)}(n-k)-h_m^{(N)}(n)\big)\varphi(k,n)\\
      & = & \sum_{k=1}^n\big(h_m^{(N)}(n-k)-h_m^{(N)}(n)\big)\int_{[0,1]}
            \pr(K_{n,t}=k)\frac{\Lambda({\rm d}t)}{t}\\
      & = & \int_{[0,1]}\sum_{k=0}^n \big(h_m^{(N)}(n-k)-h_m^{(N)}(n)\big)
            \pr(K_{n,t}=k)\frac{\Lambda({\rm d}t)}{t}\\
      & = & \int_{[0,1]}\Big(\me\big(h_m^{(N)}(n-K_{n,t})\big)-h_m^{(N)}(n)\Big)
            \frac{\Lambda({\rm d}t)}{t}\\
      & = & \int_{[0,1]}\big((1-t)^m h_m^{(N)}(n)-h_m^{(N)}(n)\big)
            \frac{\Lambda({\rm d}t)}{t}\\
      & = & h_m^{(N)}(n)\int_{[0,1]}\frac{(1-t)^m-1}{t}\,\Lambda({\rm d}t)
      \ = \ -\varphi(m)h_m^{(N)}(n),
   \end{eqnarray*}
   where the last equality holds by the integral formula for the total rate $\varphi(m)$. Thus, (\ref{first}) is established. It remains to verify (\ref{second}). Using the definition of $\varphi(k,n)$, the left hand side of (\ref{second}) turns into
   \begin{eqnarray*}
      &   & \hspace{-15mm}
            \sum_{k=1}^{N-n}
            \big(h_m^{(N)}(n+k)-h_m^{(N)}(n)\big)\varphi(k,N-n)\\
      & = & \sum_{k=1}^{N-n}
            \big(h_m^{(N)}(n+k)-h_m^{(N)}(n)\big)\int_{[0,1]}\pr(K_{N-n,t}=k)
            \frac{\Lambda({\rm d}t)}{t}\\
      & = & \int_{[0,1]}
            \sum_{k=0}^{N-n}\big(h_m^{(N)}(n+k)-h_m^{(N)}(n)\big)\pr(K_{N-n,t}=k)
            \frac{\Lambda({\rm d}t)}{t}\\
      & = & \int_{[0,1]}
            \Big(
               \me\big(h_m^{(N)}(n+K_{N-n,t})\big)-h_m^{(N)}(n)
            \Big)\frac{\Lambda({\rm d}t)}{t}
   \end{eqnarray*}
   Similarly, for the right hand side of (\ref{second}) we obtain
   \[
   \sum_{k=1}^m \varphi(k,m)h_{m-k}^{(N)}(n) - \varphi(m)h_m^{(N)}(n)
   \ =\ \int_{[0,1]} \Big(
           \me\big(h_{m-K_{m,t}}^{(N)}(n)\big) - h_m^{(N)}(n)
        \Big)\frac{\Lambda({\rm d}t)}{t}.
   \]
   Thus, (\ref{second}) holds, since $\me\big(h_m^{(N)}(n+K_{N-n,t})\big)=\me\big(h_{m-K_{m,t}}^{(N)}(n)\big)$ by Lemma \ref{aux}.
\end{proof}
\begin{proof}[Proof of Theorem \ref{main3}]
   We essentially follow the proof of Theorem \ref{main2}. The calculations are slightly more involved due to the discrete nature of the involved processes, requiring a replacement of monomials by factorials.
   The linearity of the generator $L^{X^{(N)}}$ and its action on the function $h_m^{(N)}$ (see Lemma \ref{generator_hypergeometric}) yields
   \begin{eqnarray*}
      L^{X^{(N)}}D^{(N)}(.,m)(n)
      & = & \frac{L^{X^{(N)}}h_m^{(N)}(n)}{\mu_m^{(N)}}
      \ = \ \frac{1}{\mu_m^{(N)}}\sum_{k=1}^m \varphi(k,m)h_{m-k}^{(N)}(n)
            -2\varphi(m)\frac{h_m^{(N)}(n)}{\mu_m^{(N)}}\\
      & = & \frac{1}{\mu_m^{(N)}}\sum_{k=1}^m\varphi(k,m)\mu_{m-k}^{(N)}D^{(N)}(n,m-k)
            -2\varphi(m)D^{(N)}(n,m).
   \end{eqnarray*}
   Thanks to the recursion (\ref{recursionhyper}) for the $\mu_m^{(N)}$, this expression can be rewritten as
   \begin{eqnarray*}
      &   & \hspace{-15mm}L^{X^{(N)}}D^{(N)}(.,m)(n)\\
      & = & \frac{1}{\mu_m^{(N)}}\sum_{k=1}^m\varphi(k,m)\mu_{m-k}^{(N)}D^{(N)}(n,m-k)
            -\frac{1}{\mu_m^{(N)}}\sum_{k=1}^m\varphi(k,m)\mu_{m-k}^{(N)}D^{(N)}(n,m)\\
      & = & \frac{1}{\mu_m^{(N)}}\sum_{k=1}^m \mu_{m-k}^{(N)}\varphi(k,m) \big(D^{(N)}(n,m-k)-D^{(N)}(n,m)\big)
      \ = \ L^{Y^{(N)}} D^{(N)}(n,.)(m),
   \end{eqnarray*}
   where $Y^{(N)}$ is a continuous-time death process with state space $\{0,\ldots,N\}$ and generator (\ref{YNgen}).
   Thus, $L^{X^{(N)}}D^{(N)}(.,m)(n)=L^{Y^{(N)}}D^{(N)}(n,.)(m)$ for all $n,m\in\{0,\ldots,N\}$
   showing that $X^{(N)}$ is dual to $Y^{(N)}$ with respect to $D^{(N)}$.
\end{proof}
\begin{proof}[Proof of Theorem \ref{main4}]
   For any function $f:\nz_0\to\rz$ let the function $\pi_Nf:\{0,\ldots,N\}\to\rz$ be defined by $\pi_Nf(m):=f(m)$ for all $m\in\{0,\ldots,N\}$.
   To prove the convergence $Y^{(N)}\to Y$ in $D_{\nz_0}([0,\infty))$, it suffices by standard theory on weak convergence of Markov processes (see, for example, Ethier and Kurtz \cite[p.~28, Theorem 6.1 and p.~232, Corollary 8.7]{EthierKurtz2005}) to verify that, for every bounded function $f:\nz_0\to\rz$,
   \begin{equation} \label{normconv}
      \|L^{Y^{(N)}}(\pi_Nf)-\pi_N(L^Yf)\|
      \ :=\ \sup_{m\in\{0,\ldots,N\}}|L^{Y^{(N)}}(\pi_Nf)(m)-L^Yf(m)|
      \ \to\ 0,\quad N\to\infty.
   \end{equation}
   In the following it is shown that the left hand side in (\ref{normconv}) is even equal to $0$ for all $N\in\nz$. As observed after Eq.~(\ref{recursionhyper}), the scaled factorial moment $\mu_m^{(N)}$ coincides with the moment $\mu_m$ as long as $m\le N$. Hence, for all $N\in\nz$ and all $m\in\{0,\ldots,N\}$, by (\ref{YNgen}) and (\ref{Ygen}),
   \[
   L^{Y^{(N)}}(\pi_Nf)(m)
   \ =\ \frac{1}{\mu_m}\sum_{k=1}^m \varphi(k,m)\mu_{m-k}\big(f(m-k)-f(m)\big)
   \ =\ L^Yf(m)
   \]
   does not depend on $N\in\nz$. Thus, the equality $L^{Y^{(N)}}(\pi_Nf)=\pi_N(L^Yf)$ holds for all $N\in\nz$, showing that the left hand side in (\ref{normconv}) is equal to $0$ for all $N\in\nz$.
\color{black}
\end{proof}
\begin{proof}[Proof of Theorem \ref{duality}]
   Let $D$ be a kernel of the polynomial form (\ref{polynomial}).
   Then, from the linearity of the generator $L^X$ and from (\ref{monomial}) it follows that
   \begin{eqnarray*}
      L^XD(.,n)(x)
      & = & \sum_{j=0}^n d_{n,j} L^X f_j(x)
      \ = \ \sum_{j=0}^n d_{n,j}\bigg(
            -2\varphi(j)x^j + \sum_{k=1}^j \varphi(k,j)x^{j-k}\bigg)\\
      & = & -2\sum_{j=0}^n d_{n,j}\varphi(j)x^j
            +\sum_{j=0}^n d_{n,j}\sum_{i=0}^{j-1}\varphi(j-i,j) x^i\\
      & = & -2\sum_{i=0}^n d_{n,i}\varphi(i)x^i
            + \sum_{i=0}^{n-1}\sum_{j=i+1}^n d_{n,j}\varphi(j-i,j) x^i.
   \end{eqnarray*}
   This expression is a polynomial of degree less than or equal to $n$ and the coefficient in front of $x^i$, $i\in\{0,\ldots,n\}$, is
   \begin{equation} \label{coef1}
      -2d_{n,i}\varphi(i) + \sum_{j=i+1}^n d_{n,j}\varphi(j-i,j),
      \qquad i\in\{0,\ldots,n\}.
   \end{equation}
   On the other hand, by the definition (\ref{generatory}) of the generator $L^Y$,
   \begin{eqnarray*}
      &   & \hspace{-15mm} L^YD(x,.)(n)
      \ = \ \sum_{k=1}^n
            \big(\varphi(k,n)+\varphi(n-k+1,n)\big)\big(D(x,n-k)-D(x,n)\big)\\
      & = & \sum_{k=1}^n\big(\varphi(k,n)+\varphi(n-k+1,n)\big)
            \bigg(
            \sum_{j=0}^{n-k}d_{n-k,j}x^j-\sum_{j=0}^n d_{n,j}x^j
            \bigg)\\
      & = & \sum_{j=0}^{n-1} x^j\sum_{k=1}^{n-j}
            d_{n-k,j}\big(\varphi(k,n)+\varphi(n-k+1,n)\big)\\
      &   & \hspace{1cm} - \sum_{j=0}^n d_{n,j}x^j
            \underbrace{\sum_{k=1}^n\big(\varphi(k,n)+\varphi(n-k+1,n)\big)}_{=2\varphi(n)}.
   \end{eqnarray*}
   Again, this expression is a polynomial of degree less than or equal to $n$ and the coefficient in front of $x^i$, $i\in\{0,\ldots,n\}$, is
   \begin{equation} \label{coef2}
      \sum_{k=1}^{n-i}d_{n-k,i}\big(\varphi(k,n)+\varphi(n-k+1,n)\big)
       - 2d_{n,i}\varphi(n).
   \end{equation}
   We would like to choose the coefficients $d_{n,i}$ in such a way that
   (\ref{coef1}) and (\ref{coef2}) coincide, since then we will have
   the equality $L^XD(.,n)(x)=L^YD(x,.)(n)$, which is the desired duality result. Putting (\ref{coef1}) equal to (\ref{coef2}) yields the
   equation
   \begin{equation}\label{recursion}
      2d_{n,i}\varphi(i)
      \ =\ \sum_{k=1}^{n-i}d_{n-k,i}\big(\varphi(k,n)+\varphi(n-k+1,n)\big)
           -\sum_{j=i+1}^n d_{n,j}\varphi(j-i,j).
   \end{equation}
   Now note that (\ref{recursion}) always holds for $i=n$ no matter how $d_{n,n}$ is chosen. Thus, one can choose the diagonal coefficients $d_{n,n}$, $n\in\nz_0$, arbitrarily. Once these diagonal coefficients are chosen, say as $d_{n,n}:=r_n$ for all $n\in\nz_0$, Eq.~(\ref{recursion}) is a recursion forwards over $n$ and for each $n$ backwards over $i=n-1,n-2,\ldots,1,0$, showing that all the off-diagonal coefficients $d_{n,i}$, $n\in\nz$, $i\in\{0,\ldots,n-1\}$, are uniquely determined by the diagonal sequence $r=(r_n)_{n\in\nz_0}$.
\end{proof}
\begin{proof}[Proof of Proposition \ref{betastat}]
   (i) ``$\Rightarrow$'': Assume first that $\mu=\beta(p,q)$, i.e. the stationary distribution of the $\beta(a,b)$-mutation process $X$ is a beta distribution for some parameters $p,q>0$. We have to verify that $a\in\{1,2\}$ or $a=b$. From the recursion (\ref{momrec}) it follows that the first four moments of $\mu$ are
   \begin{equation} \label{firstfourmoments}
      \begin{array}{lll}
      \mu_1 & = & \displaystyle\frac{1}{2},\quad \mu_2\ =\ \frac{a+b}{2(a+2b)},\quad
      \mu_3\ =\ \displaystyle\frac{2a+b}{4(a+2b)}\quad\mbox{and}\quad\\
      & & \\
      \mu_4 & = & \displaystyle\frac{(a+b+2)(a^3+a^2+3a^2b+4ab+4ab^2+b^2+b^3) }{2(a+2b)(a+2b+2)(a^2+2ab+a+4b+2b^2)}.
      \end{array}
   \end{equation}
   Note that the formula for $\mu_4$ is a rather involved expression in $a$ and $b$. Comparing $\mu_1=\frac{1}{2}$ with the mean $\frac{p}{p+q}$ of the $\beta(p,q)$-distribution it follows that $p=q$. The symmetric beta distribution $\beta(p,p)$ has second moment $\frac{p(p+1)}{2p(2p+1)}=\frac{p+1}{2(2p+1)}$. Since this expression is equal to $\mu_2$ it follows that $p=b/a$.  For this $p$, even the third moment $\frac{p(p+1)(p+2)}{2p(2p+1)(2p+2)}=\frac{p+2}{4(2p+1)}=\frac{2a+b}{4(a+2b)}$ of the $\beta(p,p)$-distribution coincides with $\mu_3$. However, the forth moment $\frac{p(p+1)(p+2)(p+3)}{2p(2p+1)(2p+2)(2p+3)}=
   \frac{(p+2)(p+3)}{4(2p+1)(2p+3)}=
   \frac{(2a+b)(3a+b)}{4(a+2b)(3a+2b)}$
   only coincides with $\mu_4$ in particular cases. A technical but straightforward calculation shows that these cases are
   $a\in\{1,2\}$ or $a=b$.

   ``$\Leftarrow$'': We have to show that, for $a\in\{1,2\}$ or $a=b$, the stationary distribution is the $\beta(b/a,b/a)$-distribution. To see this it suffices to verify that the moments of the beta distribution $\mu:=\beta(b/a,b/a)$ satisfies the recursion (\ref{momrec}). We do this separately for the three cases $a=1$, $a=2$ or $a=b$.

   \vspace{2mm}

   \textbf{Case 1}: Assume that $a=1$.
   In the following, instead of verify the recursion (\ref{momrec}), it is even shown that $\mu:=\beta(b,b)$ is reversible, i.e.~for all functions $f,g\in L_2(\mu)$ it is shown that
   \begin{equation} \label{adjoint}
      \langle L^Xf,g\rangle\ =\ \langle f,L^Xg\rangle,
   \end{equation}
   where $\langle\cdot,\cdot\rangle$ denotes the scalar product in $L_2(\mu)$. Recalling the definition of $L^X$ we have $\langle L^Xf,g\rangle=I_1+I_2$, where
   \[
   I_1\ :=\ \int_0^1 \left(\int_0^1 \big(f(x(1-t))-f(x)\big) \frac{1}{t}(1-t)^{b-1}\,{\rm d}t\right)g(x)\frac{x^{b-1}(1-x)^{b-1}}{{\rm B}(b,b)}\,{\rm d}x
   \]
   and
   \[
   I_2\ :=\ \int_0^1\left(\int_0^1\big(f(x(1-t)+t)-f(x)\big)\frac{1}{t}(1-t)^{b-1}
   \,{\rm d}t\right)g(x)\frac{x^{b-1}(1-x)^{b-1}}{{\rm B}(b,b)}\,{\rm d}x.
   \]
   The change of variables $(x,t)\mapsto(x,y)$ with $y:=x(1-t)$ yields
   \[
   I_1\ =\ \frac{1}{{\rm B}(b,b)}\iint\limits_{0<y<x<1}\big(f(y)-f(x)\big)g(x) \frac{y^{b-1}(1-x)^{b-1}}{x-y}\,{\rm d}x\,{\rm d}y
   \]
   and changing variables $(x,t)\mapsto(x,y)$ with $y:=x(1-t)+t$ gives
   \[
   I_2\ =\ \frac{1}{{\rm B}(b,b)}\iint\limits_{0<x<y<1}\big(f(y)-f(x)\big)g(x) \frac{(1-y)^{b-1}x^{b-1}}{y-x}\,{\rm d}x\,{\rm d}y.
   \]
   In a similar manner, one shows that $\langle f,L^Xg\rangle=I_2+I_1$.
   As a consequence \eqref{adjoint} holds.

   \color{black}

   \vspace{2mm}

   \textbf{Case 2}: Assume that $a=2$. The beta distribution $\beta(\frac{b}{2},\frac{b}{2})$ has moments $m_n:=\frac{\Gamma(b)\Gamma(n+\frac{b}{2})}{\Gamma(\frac{b}{2})\Gamma(n+b)}$. Therefore, using \eqref{rate},
   \begin{eqnarray*}
      \sum_{k=1}^n m_{n-k}\varphi_{2,b}(k,n)
      & = & \sum_{k=1}^n
            \frac{\Gamma(b)\Gamma(n-k+\frac{b}{2})}
                 {\Gamma(\frac{b}{2})\Gamma(n-k+b)}
            \binom{n}{k}\frac{1}{{\rm B}(2,b)}\frac{\Gamma(k+1)\Gamma(n-k+b)}{\Gamma(n+b+1)}\\
      & = & \frac{1}{{\rm B}(2,b)}\frac{\Gamma(b)}{\Gamma(\frac{b}{2})}
            \frac{n!}{\Gamma(n+b+1)}
            \sum_{k=1}^n\frac{\Gamma(n-k+\frac{b}{2})}{(n-k)!}.
   \end{eqnarray*}
   The last sum simplifies to $\sum_{i=0}^{n-1}\frac{\Gamma(i+\frac{b}{2})}{i!}=\frac{\Gamma(n+\frac{b}{2})}{\frac{b}{2}\Gamma(n)}$, which is easily verified by induction on $n\in\nz$. Thus,
   \begin{eqnarray*}
      \sum_{k=1}^n m_{n-k}\varphi_{2,b}(k,n)
      & = & \frac{1}{{\rm B}(2,b)}\frac{\Gamma(b)}{\Gamma(\frac{b}{2})}\frac{n!}{\Gamma(n+b+1)}
   \frac{\Gamma(n+\frac{b}{2})}{\frac{b}{2}\Gamma(n)}\\
      & = & \frac{1}{{\rm B}(2,b)}2\frac{\Gamma(b)\Gamma(n+\frac{b}{2})}{\Gamma(\frac{b}{2})\Gamma(n+b)}
    \frac{n}{b(n+b)}
   =2m_n\varphi_{2,b}(n),
   \end{eqnarray*}
   since $\varphi_{2,b}(n)=\frac{1}{{\rm B}(2,b)}\frac{n}{b(n+b)}$ by \eqref{total-rate-ab}. Thus, the
   moments $m_n$ satisfy the recursion (\ref{momrec}).

   \vspace{2mm}

   \textbf{Case 3}: For the case $a=b>0$, the moments $m_n:=1/(n+1)$ of the uniform distribution satisfy
   \begin{eqnarray*}
      \sum_{k=1}^n m_{n-k}\varphi_{a,a}(k,n)
      & = & \sum_{k=1}^n \frac{1}{n-k+1}\binom{n}{k}
            \frac{1}{{\rm B}(a,a)}\int_0^1 t^{k+a-2}(1-t)^{n-k+a-1}\,{\rm d}t\\
      & = & \frac{1}{n+1}\frac{1}{{\rm B}(a,a)}\int_0^1 t^{a-2}(1-t)^{a-2}
            \sum_{k=1}^n\binom{n+1}{k}t^k(1-t)^{n+1-k}\,{\rm d}t\\
      & = & m_n \frac{1}{{\rm B}(a,a)}\int_0^1 t^{a-2}(1-t)^{a-2}
            \big(1-(1-t)^{n+1}-t^{n+1}\big)\,{\rm d}t.
   \end{eqnarray*}
   For $a\ne 1$, the last integral is equal to
   ${\rm B}(a-1,a-1)-{\rm B}(a-1,n+a)-{\rm B}(n+a,a-1)
   =2{\rm B}(a-1,a) - 2 {\rm B}(a-1,n+a)
   =2{\rm B}(a,a)\varphi_{a,a}(n)$,
   since ${\rm B}(a-1,a-1)=2{\rm B}(a-1,a)$, and for $a=1$ this integral is equal to
   \begin{eqnarray*}
      \int_0^1 \frac{1-(1-t)^{n+1}-t^{n+1}}{t(1-t)}\,{\rm d}t
      & = & \int_0^1 \bigg(\frac{1-t^n}{1-t} + \frac{1-(1-t)^n}{t}\bigg)\,{\rm d}t\\
      & = & 2\big(\Psi(n+1)-\Psi(1)\big)
      \ = \ 2\varphi_{1,1}(n).
   \end{eqnarray*}
   Thus, again, the recursion (\ref{momrec}) holds,
   which shows that the uniform distribution is the stationary distribution for the case $a=b$.

   (ii) If $a=1$ then it was already verified in (i) that the detailed balance condition holds. Thus, for $a=1$ the stationary distribution is reversible.

   Conversely, assume that the stationary distribution \textcolor{blue}{$\mu$} is reversible.
   We have to verify that $a=1$. Since $\mu$ is reversible, we have $\langle L^Xf_n,f_m\rangle=\langle f_n,L^Xf_m\rangle$ for all $n,m\in\nz_0$. Using the action (\ref{monomial}) of $L^X$ on monomials it is readily seen that these equations are equivalent to
   \begin{equation} \label{reversibility}
      \sum_{k=1}^n \varphi(k,n)\mu_{n+m-k} - 2\varphi(n)\mu_{n+m}
      \ =\ \sum_{k=1}^m \varphi(k,m)\mu_{n+m-k} - 2\varphi(m)\mu_{n+m},
   \quad n,m\in\nz_0,
   \end{equation}
   where $\mu_n:=\int f_n\,{\rm d}\mu$ denotes the $n$th moment of $\mu$, $n\in\nz_0$. For $m=0$, the right hand side in (\ref{reversibility}) vanishes, leading to the recursion (\ref{momrec}) for the moments of $\mu$. In particular, the first four moments of $\mu$ are given by (\ref{firstfourmoments}). The obvious idea of obtaining the desired result $a=1$ by exploiting (\ref{reversibility}) for $(m,n)=(1,2)$ fails, since it turns out that (\ref{reversibility}) is satisfied for all $a,b>0$ in this case. The next attempt is hence to consider $(m,n)=(1,3)$, in which case (\ref{reversibility}) turns into
   \begin{eqnarray*}
      &   & \hspace{-10mm}0
      \ = \ \big(\varphi(1,3)-\varphi(1,1)\big)\mu_3 + \varphi(2,3)\mu_2 + \varphi(3,3)\mu_1
      +2\big(\varphi(1)-\varphi(3)\big)\mu_4\\
      & = & \bigg(\frac{3b(b+1)}{(a+b)(a+b+1)}-1\bigg)\frac{2a+b}{4(a+2b)}\\
      &   & \hspace{1mm}+\frac{3ab}{(a+b)(a+b+1)}\frac{a+b}{2(a+2b)}
            +\frac{a(a+1)}{(a+b)(a+b+1)}\frac{1}{2}\\
      &   & \hspace{1mm}
            + 2\bigg(1-\frac{a^2+3ab+a+3b^2+3b}{(a+b)(a+b+1)}\bigg)
            \frac{(a+b+2)(a^3+a^2+3a^2b+4ab+4ab^2+b^2+b^3) }{2(a+2b)(a+2b+2)(a^2+2ab+a+4b+2b^2)}
            \\
      & = & \frac{ab(1-a^2)}{4(a^2+2ab+a+4b+2b^2)(a+b)(a+b+1)},
   \end{eqnarray*}
   where the last equality follows via tedious but straightforward calculus.
   The latter fraction is equal to $0$ only for $a=1$. The proof is complete.
\color{black}
\end{proof}
\begin{proof}[Proof of Lemma \ref{stieltjes}]
   As in Handa \cite{Handa2014}, define $G_t(x):=1/(1+tx)$. A straightforward calculation shows that
   \[
   G_t(x(1-u))+G_t(x(1-u)+u)-2G_t(x)
   \ =\ -\frac{tu}{1+tx}\bigg(\frac{1-x}{1+tx(1-u)+tu}-\frac{x}{1+tx(1-u)}\bigg).
   \]
   Therefore,
   \begin{eqnarray*}
      L^XG_t(x)
      & = & \int_0^1 \frac{u^{a-1}(1-u)^{b-1}}{u}
            \big(G_t(x(1-u))+G_t(x(1-u)+u)-2G_t(x)\big)\,{\rm d}u\\
      & = & -\int_0^1 u^{a-1}(1-u)^{b-1}\frac{t}{1+tx}
      \bigg(\frac{1-x}{1+tx(1-u)+tu}-\frac{x}{1+tx(1-u)}\bigg)\,{\rm d}u\\
      & = & -\frac{t(1-x)}{1+tx}\int_0^1 \frac{u^{a-1}(1-u)^{b-1}}{1+tx(1-u)+tu}\,{\rm d}u
      +\frac{tx}{1+tx}\int_0^1 \frac{u^{a-1}(1-u)^{b-1}}{1+tx(1-u)}\,{\rm d}u.
   \end{eqnarray*}
   Since $a+b=1$, both integrals are known explicitly (Handa \cite[Eq.~(2.4)]{Handa2014}), which yields
   \begin{eqnarray*}
      L^XG_t(x)
      & = & -\frac{t(1-x)}{1+tx}{\rm B}(a,b)(1+t)^{-a}(1+tx)^{-b}
   +\frac{tx}{1+tx}{\rm B}(a,b)(1+tx)^{-b}\\
      & = & -{\rm B}(a,b)t\bigg(\frac{1-x}{(1+t)^{1-b}(1+tx)^{b+1}}
   -\frac{x}{(1+tx)^{b+1}}\bigg).
   \end{eqnarray*}
   For all $t\ge 0$, $\int L^XG_t(x)\,\mu({\rm d}x)=0$, which gives
   \[
   \int\frac{x}{(1+tx)^{b+1}}\,\mu({\rm d}x)
   \ =\ \frac{1}{(1+t)^{1-b}}\int\frac{1-x}{(1+tx)^{b+1}}\,\mu({\rm d}x).
   \]
   The left and right integrals are equal to $-S_b'(t)/b$ and
   $\frac{1+t}{b}S_b'(t)+S_b(t)$ respectively, which yields
   the differential equation $\big((1+t)^{1-b}+1+t\big)S_b'(t)=-bS_b(t)$
   and, hence, via integration
   \[
   -\log S_b(t)\ =\ \int_0^t \frac{b}{(1+u)^{1-b}+1+u}\,{\rm d}u,
   \qquad t\ge0.
   \]
   An antiderivative function is $F(u):=\log(1+(1+u)^b)$, which yields
   $-\log S_b(t)
   =F(t)-F(0)
   =\log(1+(1+t)^b)-\log 2
   =\log\big((1+(1+t)^b)/2\big)$
   and, hence, the solution (\ref{S}).
\end{proof}
\begin{proof}[Proof of Lemma \ref{aplusbequal1}]
   The substitution $y:=1/x$ yields
   \[
   S_b(z)
   \ =\ \int_0^1(1+zx)^{-b}f_\mu(x)\,{\rm d}x\\
   \ =\ \int_1^\infty(1+z/y)^{-b} f_\mu(1/y)y^{-2}\,{\rm d}y\\
   \ =\ \int_1^\infty (y+z)^{-b}F(y)\,{\rm d}y,
   \]
   where $F(y):=y^{b-2}f_\mu(1/y)$ for all $y>1$. By inversion (see, for example, Schwarz \cite[Eq.~(9)]{Schwarz2005}),
   \[
   F(y)\ :=\ -\frac{1}{2\pi i}y^b\int_C (1+w)^{b-1}S_b'(yw)\,{\rm d}w,\qquad y\in(1,\infty),
   \]
   where $C$ is a contour that starts and ends at the point $w=-1$ and encloses the origin. Multiplication with $y^{2-b}$ yields
   \[
   f_\mu(1/y)\ =\ y^{2-b}F(y)
   \ =\ -\frac{1}{2\pi i}y^2\int_C (1+w)^{b-1}S_b'(yw)\,{\rm d}w,\qquad y\in(1,\infty),
   \]
   and back substitution $y:=1/x$ leads to
   \[
   f_\mu(x)\ =\ -\frac{1}{2\pi i}x^{-2}\int_C (1+w)^{\textcolor{blue}{b}-1}S_b'(w/x)\,{\rm d}w,
   \qquad x\in(0,1).
   \]
   The result now follows from $S_b'(z)=-\frac{2b(1+z)^{b-1}}{(1+(1+z)^b)^2}$.
\end{proof}
\begin{proof}[Proof of Proposition \ref{specialmoments}]
   The expansion of the generalized Stieltjes transform $S_b(t)$ in powers of $t$ is obtained via
   \begin{eqnarray*}
      S_b(t)
      & = & \frac{2}{1+(1+t)^b}
      \ = \ \sum_{k=0}^\infty \bigg(\frac{1-(1+t)^b}{2}\bigg)^k\\
      & = & 1 + \sum_{k=1}^\infty
            \frac{(-1)^k}{2^k}\big((1+t)^b-1\big)^k
      \ = \ 1 + \sum_{k=1}^\infty \frac{(-1)^k}{2^k}
            \bigg(\sum_{i=1}^\infty \binom{b}{i}t^i\bigg)^k\\
      & = & 1 + \sum_{k=1}^\infty \frac{(-1)^k}{2^k}
            \sum_{i_1,\ldots,i_k\in\nz}
            \binom{b}{i_1}\cdots\binom{b}{i_k} t^{i_1+\cdots+i_k}\\
      & = & 1 + \sum_{k=1}^\infty \frac{(-1)^k}{2^k}
            \sum_{n=k}^\infty t^n \sum_{\substack{i_1,\ldots,i_k\in\nz\\ i_1+\cdots+i_k=n}} \binom{b}{i_1}\cdots\binom{b}{i_k}\\
      & = & 1 + \sum_{n=1}^\infty t^n
            \sum_{k=1}^n \frac{(-1)^k}{2^k}
            \sum_{\substack{i_1,\ldots,i_k\in\nz\\i_1+\cdots+i_k=n}}.
            \binom{b}{i_1}\cdots\binom{b}{i_k}.
   \end{eqnarray*}
   On the other hand,
   $S_b(t)
   =\int(1+tx)^{-b}\mu({\rm d}x)
   =\int\sum_{n=0}^\infty \binom{-b}{n}(tx)^n\,\mu({\rm d}x)
   =\sum_{n=0}^\infty t^n\binom{-b}{n}\mu_n$.
   Comparing the coefficient in front of $t^n$ yields the solution
   \[
   \mu_n\ =\ \frac{1}{\binom{-b}{n}}
   \sum_{k=1}^n \frac{(-1)^k}{2 ^k}
   \sum_{\substack{i_1,\ldots,i_k\in\nz\\i_1+\cdots+i_k=n}}
   \binom{b}{i_1}\cdots\binom{b}{i_k},
   \qquad n\in\nz.
   \]
   The last multi sum is expressed by a single sum (see, for example,
   \cite[Eq.~(13)]{Moehle2010} as
   \begin{equation} \label{multisum}
      \sum_{\substack{i_1,\ldots,i_k\in\nz\\i_1+\cdots+i_k=n}}
      \binom{b}{i_1}\cdots\binom{b}{i_k}
      \ =\ \sum_{\ell=1}^k (-1)^{k-\ell}\binom{k}{\ell}\binom{b\ell}{n},
      \qquad n,k\in\nz,
   \end{equation}
   which yields the first formula in (\ref{mom}). Using well-known formulas for the standard Stirling numbers $s(.,.)$ and $S(.,.)$ of the first and second kind respectively we obtain
   \begin{eqnarray*}
      &   & \hspace{-14mm}
      \sum_{\ell=1}^k (-1)^{k-\ell}\binom{k}{\ell}\binom{b\ell}{n}
      \ = \ \frac{1}{n!}\sum_{\ell=1}^k
            (-1)^{k-\ell}\binom{k}{\ell}(b\ell)_n
      \ = \ \frac{1}{n!}\sum_{\ell=1}^k(-1)^{k-\ell}\binom{k}{\ell}
            \sum_{j=1}^n s(n,j)(b\ell)^j\\
      & = & \frac{1}{n!}\sum_{j=1}^n s(n,j)b^j
            \sum_{\ell=1}^k(-1)^{k-\ell}\binom{k}{\ell}\ell^j
      \ = \ \frac{k!}{n!}\sum_{j=1}^n s(n,j)b^jS(j,k),
   \end{eqnarray*}
   which yields the second formula in (\ref{mom}).
\end{proof}
\begin{acknowledgement}
The authors would like to thank Frank Redig for many enlightening discussions, in particular for his suggestion of the dual process when the measure $\Lambda$ is the uniform distribution.
The authors also thank the University of T\"ubingen and the Modena and Reggio Emilia University for supporting two one-month research visits where parts of this work were conducted.
The whole probability group at the University of T\"ubingen and the mathematical statistical physics group at the Modena and Reggio Emilia University are warmly acknowledged.

\end{acknowledgement}


\end{document}